\numberwithin{equation}{section}
\newcommand{\C}{{\mathbb{C}}}
\newcommand{\I}{{\mathfrak{I}}}
\newcommand{\N}{{\mathbb{N}}}
\newcommand{\R}{{\mathbb{R}}}
\newcommand{\Z}{{\mathbb{Z}}}
\newcommand{\op}{\textit{op}}
\newcommand{\qtq}[1]{\quad\text{#1}\quad}
\newcommand{\eps}{\varepsilon}
\newcommand{\ddt}{\frac{d\ }{dt}}
\newcommand{\ren}{\mathrm{ren}}
\DeclareMathOperator*{\wlim}{w-lim}
\DeclareMathOperator{\tr}{tr}
\DeclareMathOperator{\sech}{sech}
\let\Re=\undefined\DeclareMathOperator{\Re}{Re}
\let\Im=\undefined\DeclareMathOperator{\Im}{Im}
\let\xdet=\det
\let\det=\undefined\DeclareMathOperator{\det}{det}
\newtheorem{theorem}{Theorem}[section]
\newtheorem{prop}[theorem]{Proposition}
\newtheorem{lemma}[theorem]{Lemma}
\newtheorem{corollary}[theorem]{Corollary}
\theoremstyle{definition}
\newtheorem{definition}[theorem]{Definition}
\theoremstyle{remark}
\newtheorem*{remark}{Remark}
\begin{document}

\title{Orbital stability of KdV multisolitons in $H^{-1}$}

\author{Rowan Killip and Monica Vi\c{s}an}

\address
{Rowan Killip\\
Department of Mathematics\\
University of California, Los Angeles, CA 90095, USA}
\email{killip@math.ucla.edu}

\address
{Monica Vi\c{s}an\\
Department of Mathematics\\
University of California, Los Angeles, CA 90095, USA}
\email{visan@math.ucla.edu}

\begin{abstract}
We prove that multisoliton solutions of the Korteweg--de Vries equation are orbitally stable in $H^{-1}(\R)$.  We introduce a variational characterization of multisolitons that remains meaningful at such low regularity and show that all optimizing sequences converge to the manifold of multisolitons.  The proximity required at the initial time is uniform across the entire manifold of multisolitons; this had not been demonstrated previously, even in $H^1$.
\end{abstract}

\maketitle

%%%%%%%%%%%%%%%%%%%%%%%%%%%%%%%%%%%%%%%%%%%%%%%%%%%%%%%%%%%%%%%%%%%%%%%%%%%%%%%%%%%%%%%%%%%%%%%%%%%%%%%%%%%%%%%%%%%%%%%%%%%%%%
\section{Introduction}
%%%%%%%%%%%%%%%%%%%%%%%%%%%%%%%%%%%%%%%%%%%%%%%%%%%%%%%%%%%%%%%%%%%%%%%%%%%%%%%%%%%%%%%%%%%%%%%%%%%%%%%%%%%%%%%%%%%%%%%%%%%%%%

The history of the  Korteweg--de Vries equation
\begin{align}\label{KdV}\tag{KdV}
\ddt q = - q''' + 6qq'
\end{align}
is profoundly intertwined with the notion of solitary waves.  Indeed, the very goal of Korteweg and de~Vries \cite{KdV1895} was to explain the empirical observation of such waves.

The fact that \eqref{KdV} admits solutions of the form
\begin{align}\label{1 soliton}
q(t,x) = - 2\beta^2 \sech^2(\beta [x - 4\beta^2 t -x_0])
\end{align}
(for any $\beta>0$ and $x_0\in \R$) explains many aspects of solitary water waves, such as the relation between height and speed.  However, the very possibility of Scott Russell's famous chance encounter with such a wave tells us something more: It must be stable!

The question of stability was considered already by Boussinesq in \cite{Boo77}.  In addition to observing the conservation of both
\begin{equation}
P(q) := \int \tfrac12 q(x)^2\,dx \qtq{and} H(q) := \int \tfrac12 q'(x)^2 + q(x)^3 \,dx,
\end{equation}
he also notes that the solitary wave profile solves the Euler--Lagrange equation associated to the problem of optimizing $H$ subject to constrained $P$.

Now if the solitary wave were a non-degenerate minimum of $H$ at constrained $P$, then stability would follow immediately, following the Lyapunov model.  However, it is not!  The simple act of translation shows that it is at best a degenerate minimum.

In the pioneering paper \cite{MR0338584}, Benjamin proved the $H^1$-orbital stability of such solitary waves: Solutions close to a soliton profile at time zero remain close to a soliton profile at all times.  This variational approach is extremely robust and has seen countless applications since.  However, it does not directly give any information about the physical location of the soliton profile, nor how this evolves with time; this is the significance of the adjective `orbital'.

In numerical simulations of a discrete form of \eqref{KdV}, Kruskal and Zabusky \cite{KruskalZabusky} observed that solitary waves exhibit an even stronger form of stability: Pairs of solitary waves emerged from collisions with the same profile and speed with which they had entered.  Nevertheless, the two waves did interact; each was spatially shifted from its original trajectory.  This particle-like behavior led Kruskal and Zabusky to coin the name \emph{soliton}; they presciently appreciated that this was an exotic phenomenon.

We now understand that while the orbital stability of single solitary waves is rather common (and can often be proved variationally), stability under collisions is extremely peculiar.  The ultimate explanation for this behavior was the discovery that \eqref{KdV} is a completely integrable Hamiltonian system; see \cite{GGKM,MR0336122,MR0252826,MR0303132}.

Just as our notion of a solitary wave crystalizes around the concrete particular solutions \eqref{1 soliton} to \eqref{KdV}, so there is a family of special solutions to \eqref{KdV} that embody the behavior of collections of solitons:

\begin{definition}[Multisoliton solutions]\label{D:multisoliton}
Fix $N\geq 1$.  Given $N$ distinct positive parameters $\beta_1, \ldots, \beta_N$ and $N$ real parameters $c_1, \ldots, c_N$, let
\begin{align}\label{E:Qbc}
Q_{\vec \beta, \vec c}(x)= - 2\tfrac{d^2}{dx^2} \ln \det\bigl[A(x)\bigr]  
\end{align}
where $A(x)$ is the $N\times N$ matrix with entries
\begin{align}\label{matrix A}
A_{\mu\nu}(x) = \delta_{\mu\nu} + \tfrac{1}{\beta_\mu+\beta_\nu} e^{-\beta_\mu(x-c_\mu)-\beta_\nu(x-c_\nu)}.
\end{align}
The unique solution to \eqref{KdV} with initial data $q(0,x) = Q_{\vec \beta, \vec c}(x)$ is
\begin{align}\label{E:Qbc(t)}
q(t,x) = Q_{\vec \beta, \vec c(t)}(x) \qtq{where} c_n(t) = c_n + 4\beta_n^2 t   .
\end{align}
\end{definition}

The beautiful formula \eqref{E:Qbc} was originally derived in \cite{KayMoses} as a description of reflectionless potentials appearing in the one-dimensional Schr\"odinger equation.  With the discovery of the inverse-scattering approach, the significance of this result for \eqref{KdV} was noted by several authors; see \cite{GGKM,MR0336122,Hirota1971,MR0328386,WadatiToda,Zakharov1971}.   By analyzing these exact solutions, the authors confirmed the particle-like interactions, described the long-time asymptotics, and determined the (universal) spatial shifts.

The idea that these explicit solutions provide a justification for empirical observations is necessarily predicated (at the very least) on their stability.  Indeed, this question has attracted considerable attention over the years, as we shall discuss shortly.  Let us begin, however, with our own contribution to this question:

\begin{theorem}\label{T:main} 
Fix $N\geq 1$ and distinct positive parameters $\beta_1, \ldots, \beta_N$.  For every $\eps>0$ there exists $\delta>0$ so that for every initial data $q(0)\in H^{-1}(\R)$ satisfying
\begin{align*}
\inf_{\vec c\in \R^N} \|q(0)- Q_{\vec \beta, \vec c}\|_{H^{-1}}<\delta,
\end{align*}
the corresponding solution $q(t)$ to \eqref{KdV} satisfies
\begin{align*}
\sup_{t\in\R}\inf_{\vec c\in \R^N} \|q(t)- Q_{\vec \beta, \vec c}\|_{H^{-1}}<\eps.
\end{align*}
\end{theorem}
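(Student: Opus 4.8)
The plan is to prove Theorem~\ref{T:main} by a variational argument grounded in the completely integrable structure of \eqref{KdV}, but carried out entirely at the level of conserved quantities that survive at $H^{-1}$ regularity. The classical template, going back to Boussinesq and made rigorous by Benjamin, characterizes a single soliton as a constrained optimizer of the polynomial conservation laws $P$ and $H$; the multisoliton analogue of Maddocks and Sachs presents $Q_{\vec\beta,\vec c}$ as a constrained critical point of a suitable combination of the first $N+1$ Hamiltonians of the KdV hierarchy. Since these polynomial functionals diverge on generic elements of $H^{-1}$, I would instead build the variational functional from the renormalized perturbation determinant of the Schr\"odinger operator $-\partial^2+q$: this generating function of conserved quantities, indexed by a spectral parameter $\vk$, is well defined and weakly sequentially continuous on bounded subsets of $H^{-1}(\R)$, it is conserved under the global $H^{-1}$ flow, and its expansion simultaneously encodes the discrete spectrum $\{-\beta_n^2\}$ and a leading quadratic term comparable to $\|\cdot\|_{H^{-1}}^2$.

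Second, I would arrange the variational problem so that the manifold $\mcM := \{Q_{\vec\beta,\vec c} : \vec c\in\R^N\}$ is \emph{exactly} the set of minimizers. Because $\vec\beta$ is fixed while the $c_n$ only translate the individual solitons along their trajectories, the functional must be invariant under the associated symmetry, and the constraints must pin down precisely the spectral data $-\beta_1^2,\dots,-\beta_N^2$ together with the reflectionless condition. Concretely, I expect to minimize a conserved renormalized energy subject to constraints forcing the transmission coefficient to vanish exactly at $i\beta_1,\dots,i\beta_N$ and to carry no reflection; since the reflectionless potentials with prescribed eigenvalues are exactly the functions $Q_{\vec\beta,\vec c}$, an Euler--Lagrange analysis should identify the minimizer set with $\mcM$.

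Third, and here I expect the main obstacle, I would establish the coercivity/compactness statement: every minimizing sequence converges in $H^{-1}$, after a suitable translation, to $\mcM$, with a modulus of convergence \emph{uniform over the whole manifold}. The difficulties are intrinsically low-regularity and non-compact. The parameters $\vec c$ range over the non-compact group $\R^N$, so individual solitons may separate to spatial infinity; moreover weak limits in $H^{-1}$ can shed mass, and the quadratic structure that controls the norm is only comparable, not equal, to the conserved functional. I would handle these via the weak continuity and coercivity of the renormalized determinant together with a concentration-compactness / profile-decomposition argument adapted to $H^{-1}$, ruling out dichotomy and vanishing by matching the conserved spectral data of any limit to those of $\mcM$. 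The uniformity in $\vec c$ -- the feature that was missing even in $H^1$ -- should come out automatically: because the functional and constraints are defined globally and are invariant under the relevant translations, the natural object is $\dist(\,\cdot\,,\mcM)$ itself, and no base configuration enters, so $\delta$ cannot degenerate as the solitons spread out or coalesce.

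Finally, I would close with the standard stability syllogism, needing only conservation and the variational characterization rather than continuous dependence of the flow. If $\inf_{\vec c}\|q(0)-Q_{\vec\beta,\vec c}\|_{H^{-1}}<\delta$, then by the $H^{-1}$-continuity of the renormalized determinant and of the constraint functionals, the conserved quantities of $q(0)$ differ from their common values on $\mcM$ by a quantity that tends to $0$ with $\delta$. These quantities are conserved by the global solution, so the same proximity holds for $q(t)$ at every $t$. Hence $q(t)$ is, for each $t$, a near-minimizer with near-optimal constraint values, and the compactness result of the previous step forces $\inf_{\vec c}\|q(t)-Q_{\vec\beta,\vec c}\|_{H^{-1}}<\eps$ uniformly in $t$, which is precisely the assertion of the theorem.
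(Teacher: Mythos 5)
Your overall strategy --- replace the polynomial conservation laws by the renormalized perturbation determinant, characterize the multisoliton manifold variationally, prove convergence of optimizing sequences by concentration compactness, and close with the conservation syllogism --- is exactly the architecture of the paper's proof. However, two specific steps in your plan, as you have stated them, would fail.

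First, you propose to impose as constraints both the vanishing of the transmission coefficient at $i\beta_1,\ldots,i\beta_N$ \emph{and} the reflectionless condition. The reflection coefficient is not a meaningful functional on $H^{-1}$ (this is precisely why the paper abandons full inverse scattering), so the second constraint cannot even be formulated for the data you must treat. Theorem~\ref{T:char} shows that this constraint is unnecessary: one minimizes $\alpha(\kappa;\cdot)$ subject only to $a_{\ren}(i\beta_m;q)=0$, proves via the Euler--Lagrange equation that any optimizer is Schwartz class, and only then invokes classical scattering theory to conclude $a(k;q)=\prod_m \frac{k-i\beta_m}{k+i\beta_m}$. Reflectionlessness is an output of optimality, not an input.

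Second, and more seriously, you propose to ``rule out dichotomy.'' Dichotomy cannot be ruled out here: the manifold $\mcM=\{Q_{\vec\beta,\vec c}\}$ is itself non-compact under separation of its constituent solitons, and an optimizing sequence may perfectly well resolve (via the profile decomposition of Proposition~\ref{P:CC}) into a sum $\sum_j Q_{\vec\beta^j,\vec c^j}(\cdot-x_n^j)$ of several multisolitons receding from one another, with the $\vec\beta^j$ partitioning $\vec\beta$. Such a sequence has no limit modulo a single translation, yet it must still be shown to be close to $\mcM$. This is the content of Section~\ref{S:MS}: Proposition~\ref{P:mol} shows that a well-separated linear combination of multisolitons is $L^2$-close, hence $H^{-1}$-close, to one exact multisoliton $Q_{\vec\beta,\vec c_n}$ for a suitably chosen $\vec c_n$ incorporating the phase shifts. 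Without this molecular decomposition, your final step --- passing from the profile decomposition of $q(t_n)$ to $\inf_{\vec c}\|q(t_n)-Q_{\vec\beta,\vec c}\|_{H^{-1}}<\eps$ --- does not close. Relatedly, the constraints are not apportioned additively among the profiles: each profile carries a different \emph{subset} of the eigenvalue constraints, which is how one identifies each profile as a multisoliton with complementary spectral data.
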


One virtue of this result is that it achieves the lowest regularity (in the $H^s$ scale) for which well-posedness is known \cite{KV} or possible \cite{MR2830706}. We shall also see that it is not difficult to recover higher-regularity results post factum:

\begin{corollary}\label{C:Hs}
Fix $s\in[-1,1]$, $N\geq 1$, and distinct positive parameters $\beta_1, \ldots, \beta_N$.  For every $\eps>0$ there exists $\delta>0$ so that
\begin{align}\label{Hs stab}
\inf_{\vec c\in \R^N} \|q(0) - Q_{\vec \beta, \vec c}\|_{H^s}<\delta
\ \implies\ %
\sup_{t\in\R}\inf_{\vec c\in \R^N} \|q(t)- Q_{\vec \beta, \vec c}\|_{H^s}<\eps.
\end{align}
\end{corollary}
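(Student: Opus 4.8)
The plan is to deduce the higher-regularity stability statement (Corollary~\ref{C:Hs}) from the $H^{-1}$ result (Theorem~\ref{T:main}) essentially by interpolation, together with the a~priori control that the conservation laws of \eqref{KdV} provide at higher regularity. Let me sketch how I would carry this out.

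First I would record the elementary interpolation inequality on the Sobolev scale: for $-1 \le s \le 1$ one has $\|f\|_{H^s} \le \|f\|_{H^{-1}}^{\theta}\,\|f\|_{H^{1}}^{1-\theta}$ for a suitable $\theta=\theta(s)\in[0,1]$ (indeed $\theta=\tfrac{1-s}{2}$), and similarly with $H^1$ replaced by any $H^{s'}$ with $s'>s$. Applied to the difference $q(t)-Q_{\vec\beta,\vec c}$, this shows that smallness in $H^{-1}$ upgrades to smallness in $H^s$ for $s<1$ \emph{provided} we have a uniform-in-time upper bound on $\|q(t)-Q_{\vec\beta,\vec c}\|_{H^{1}}$ (or in a slightly higher norm than the target). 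So the task reduces to producing such an a~priori bound from an $H^{s'}$ smallness hypothesis at the initial time.

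The mechanism for the a~priori bound is the well-posedness and conservation-law theory of \eqref{KdV} at higher regularity. At the level of $H^s$ for $0\le s\le 1$ the relevant quantities are built from the momentum $P(q)$ and energy $H(q)$ (and, for the low end, the polynomial conservation laws or the global-in-time bounds that accompany well-posedness); these are conserved along the flow and are continuous on the appropriate Sobolev space. Concretely, starting from the hypothesis $\inf_{\vec c}\|q(0)-Q_{\vec\beta,\vec c}\|_{H^s}<\delta$ I would first select $\vec c_0$ nearly achieving the infimum, so that $q(0)$ lies within $O(\delta)$ of the smooth, fixed-profile multisoliton $Q_{\vec\beta,\vec c_0}$ in $H^s$. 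Since the multisoliton profiles are Schwartz functions with norms depending only on $\vec\beta$ (uniformly in the translation parameters $\vec c$), this gives a uniform bound $\|q(0)\|_{H^s}\le C(\vec\beta)$, and by continuity of the conserved quantities and the conservation of the $H^s$-controlling functionals along the flow, one obtains $\sup_{t}\|q(t)\|_{H^s}\le C(\vec\beta)$, hence in particular $\sup_t\|q(t)-Q_{\vec\beta,\vec c}\|_{H^{s}}\le C'(\vec\beta)$ for any $\vec c$. Feeding this uniform bound into the interpolation inequality, together with the $H^{-1}$ smallness furnished by Theorem~\ref{T:main}, produces the desired $\eps$-smallness in $H^s$: given target $\eps$ at regularity $s<1$, choose the intermediate regularity and $\theta$, then pick $\delta$ small enough that the $H^{-1}$ conclusion is $\le (\eps/C')^{1/\theta}$.

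The main obstacle, and the place requiring genuine care rather than routine estimation, is the endpoint $s=1$, where interpolation against an $H^{1+}$ bound would need a conservation law controlling a norm \emph{strictly above} $H^1$; this is available through the higher KdV invariants but must be invoked correctly, and one must confirm that the relevant functional is indeed conserved and continuous and that the multisoliton profiles have finite value under it (they do, being Schwartz). A secondary subtlety is that Theorem~\ref{T:main} delivers smallness of $\inf_{\vec c}\|\cdot\|_{H^{-1}}$ with the minimizing $\vec c$ possibly depending on $t$, whereas the a~priori bound is stated for the fixed initial-profile comparison; reconciling these requires only that the $H^s$ norm of $Q_{\vec\beta,\vec c}$ is uniform in $\vec c$ (translation invariance), so the same $C'(\vec\beta)$ works for every competitor $\vec c$ in the time-dependent infimum, and the interpolation can be applied with the $t$-dependent minimizer on both factors.
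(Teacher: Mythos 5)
There is a genuine gap, and it is not confined to the endpoint $s=1$: your interpolation scheme requires a uniform-in-time bound on $q(t)-Q$ in a norm \emph{strictly above} the target regularity $H^s$, but the hypothesis of Corollary~\ref{C:Hs} only places $q(0)$ within $\delta$ of a multisoliton in $H^s$ itself. A function that is $\delta$-close in $H^s$ to a Schwartz profile need not lie in $H^{s'}$ for any $s'>s$; in particular, for $s=0$ the quantity $\|q(t)-Q\|_{H^1}$ may be infinite, and the bound $\|f\|_{H^s}\le\|f\|_{H^{-1}}^{\theta}\|f\|_{H^{s}}^{1-\theta}$ with the higher norm replaced by the same $H^s$ is vacuous ($\theta=0$). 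The conservation laws can only give you $\sup_t\|q(t)\|_{H^{s}}\le C(\vec\beta)$ at the regularity of the data, never above it, so the ``slightly higher norm'' you need is simply unavailable at every $s\in(-1,1]$, not just at $s=1$.

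The paper circumvents this in two different ways, neither of which is interpolation against a higher norm. For $s\in\{0,1\}$ it expands the square of the conserved quantity: for instance
\begin{align*}
\| q(t) - Q(t) \|_{L^2}^2 =  \| q(t) \|_{L^2}^2 - \| Q(t) \|_{L^2}^2 - 2 \langle q(t) - Q(t),\,Q(t)\rangle ,
\end{align*}
where the first two terms are handled by conservation of $P$ and the cross term is estimated by the \emph{duality pairing} $\|q(t)-Q(t)\|_{H^{-1}}\|Q(t)\|_{H^{1}}$ --- the extra regularity is demanded only of the explicit multisoliton $Q$, which is Schwartz, never of $q$. For $s\in(-1,1)$ the missing high-frequency control is supplied not by a higher-norm bound but by \emph{equicontinuity of the orbit} in $H^s$ (Lemma~\ref{L:equiL}, resting on the a priori equicontinuity results of \cite{KV,KVZ}): one splits $\|q(t)-Q(t)\|_{H^s}^2 \lesssim N^{2+2s}\|q(t)-Q(t)\|_{H^{-1}}^2 + N^{2s-2}\|P_{\geq N}Q(t)\|_{H^1}^2 + \|P_{\geq N}q(t)\|_{H^s}^2$ and uses that the last term is uniformly small for $N$ large. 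Equicontinuity is exactly the available substitute for the higher-regularity bound your argument presupposes; without introducing it (or the duality trick), the proposal does not close.
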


The restriction $s\leq 1$ should not be taken too seriously.  Our goal is simply to illustrate two basic methods of raising the regularity without making the discussion too extensive, yet also recovering the important cases $L^2$ and $H^1$.

Let us now turn toward a discussion of prior work, after which we will discuss how the proof of Theorem~\ref{T:main} will proceed.  We do not intend to dwell on the question of well-posedness, since this is rather decoupled from the question of stability:  Proving an assertion like \eqref{Hs stab} only for Schwartz solutions still cuts to the heart of the matter; the Schwartz restriction can then be trivially removed once well-posedness in $H^s$ is known.  Indeed, Benjamin's work on $H^1$-stability should only grow in our estimation when we consider that well-posedness in $H^1$ was not achieved until many years later, \cite{MR1086966}.  Conversely, having obtained  well-posedness in $H^{-1}$ in \cite{KV}, it is timely to address the orbital stability in this space. 

It is also true that well-posedness alone provides little assistance in proving \eqref{Hs stab}.  Nevertheless, it has proved useful in the consideration of slightly weaker assertions, where $\delta$ is permitted to depend on the parameters $\vec c$ of the multisoliton nearest the initial data.  The manner in which it helps is this: Exact multisolitons resolve (as $t\to\pm\infty$) into essentially a linear combination of well-separated (and increasingly separated) simple solitary waves of the form \eqref{1 soliton}.  Thus, researchers may confine their analyses to this more favorable scenario and exploit well-posedness to cover the remaining compact time interval.

While Benjamin's argument \cite{MR0338584} was both extremely novel and compelling, it did contain some mathematical lacunae, particularly with regard to the treatment of the modulation parameters.  These issues were thoroughly addressed by Bona \cite{MR0386438}.  This approach was further developed to treat NLS and gKdV by Weinstein \cite{MR0820338}.

Orbital stability of the single soliton \eqref{1 soliton} in $L^2$ was only shown much more recently, by Merle and Vega \cite{MR1949297}.  These authors also show a form of asymptotic stability: one has $L^2$-convergence to a soliton profile in any bounded window traveling with the soliton.  Stronger forms of asymptotic stability such as global $L^2$ convergence are clearly forbidden by the conservative nature of the equation.  We should also note that it is not claimed that the solution is converging to a single solitary wave with fixed translation parameter $x_0$; indeed, subsequent analysis by Martel and  Merle \cite{MR2109467} shows that this cannot be guaranteed:   successive interactions with a large number of wide (and so $L^2$-small) solitary waves can lead to logarithmic divergence of the soliton trajectory from a straight line.

The Merle-Vega proof of $L^2$-orbital stability of single solitons combines the Miura map with orbital stability of the kink solutions proven in \cite{MR1831831}.  (While Zhidkov focusses on the NLS equation, his variational analysis employs only conservation laws common to mKdV.)

In the same paper \cite{MR0235310} that introduced the Lax pair, Lax also discusses two-soliton solutions with a view to explaining the properties of such waves observed in \cite{KruskalZabusky}.  His construction of such solutions is based on a differential equation derived from the polynomial conservation laws discovered earlier in \cite{MR0252826}.   While Lax does not explicitly express it thus in this paper (see \cite{MR0369963}, however), his equation arises as the Euler--Lagrange equation for optimizing the third conserved quantity with the first two constrained.  In general, $N$-solitons are critical points of the variational problem of optimizing the $(N+1)$ polynomial conserved quantity constrained by its $N$ predecessors (we exclude the Casimir $\int q\, dx$ from our enumeration). 

This constrained variational problem was analyzed by Maddocks and Sachs in \cite{MR1220540}.  They showed that multisolitons are in fact local minimizers.  The essential (and subtle) point addressed by these authors is to understand the Hessian of the highest-order conservation law on the manifold of multisolitons, both directly and restricted to directions parallel to the constraint manifold.

As the analysis in \cite{MR1220540} is localized in small neighbourhoods of the soliton profiles, it does not address either of the following questions:   
Are $N$-solitons global minimizers of this variational problem?  Are they the \emph{only} minimizers?  To the best of our knowledge, both questions remain open.  Theorem~\ref{T:char} below gives an affirmative answer to both questions for the variational description we employ.

Orbital stability of multisolitons in $H^1$ was shown by Martel, Merle, and Tsai in \cite{MR1946336}.  The principal part of the argument is showing that a system of well-separated solitons (ordered by speed) is future-stable.  Subsequently in \cite{MR2984057}, Alejo, Mu\~noz, and Vega proved orbital stability by using Gardner's generalization of the Miura map and applying the ideas of \cite{MR1946336} to the resulting Gardner equation.  These works do not yield orbital stability in the strong form of \eqref{Hs stab}; they rely on local well-posedness in the manner discussed earlier.  Additional information on the modulation parameters over this initial interval is obtained (a posteriori) in \cite{MR2339841}.

A different approach to orbital stability of solitons based on \emph{auto}B\"acklund transformations (which add or remove solitons) was demonstrated recently in \cite{MR2920823}.  This work proves a strong form of $L^2$-orbital stability of one-solitons for the focusing cubic NLS on the line by combining these transformations with stability of the zero solution.  This approach was substantially advanced in \cite{KochTataru}, where low-regularity orbital stability of NLS multisolitons (including the delicate case of multiple eigenvalues) was proved.  To the best of our knowledge, these ideas have not yet been applied to \eqref{KdV}.

Let us now turn to the topic of the methods to be employed in this paper.  Our discussion will be somewhat discursive since we shall take the time to introduce the central object of our methodology, the (doubly) renormalized perturbation determinant, as well as historical and contextual matters that we find instructive.
  
As we have discussed, the stability of multisolitons is historically (and physically) inseparable from the complete integrability of \eqref{KdV}.  The key question is how this complete integrability is to be exploited.

The long-standing approach, introduced already in \cite{GGKM}, is to employ the scattering theory of one-dimensional Schr\"odinger operators with the potential given by the \eqref{KdV} wave form at a fixed time.  Despite receiving a great deal of attention over the years (with much impetus taken from the study of KdV), there is currently no satisfactory theory of forward or inverse scattering in any $H^s$ space.  While non-trivial problems do attend low regularity, it is the slow decay associated with such spaces that is most devastating.  We are truly at a loss as to how to define the reflection coefficient or how to handle embedded eigenvalues and singular continuous spectrum.

The inverse-scattering technique is capable of providing extremely detailed long-time asymptotics for the class of solutions to which it is applicable; see \cite{MR2525595}, for example.  However, due to the difficulties outlined above, it has not yet yielded stability of even single-solitons in any $H^s$ space. 

While the reflection coefficient is fragile, it has long been appreciated that the transmission coefficient is much more robust.  One intuitive explanation for this is that the transmission coefficient actually represents the boundary values of a function meromorphic in the upper half-plane.  Analytically, it is preferable to consider the reciprocal $a(k;q)$ of the transmission coefficient.  This is holomorphic in the upper half-plane and its zeros precisely encode the discrete spectrum of the attendant Schr\"odinger operator.  The simplest description is as the Wronskian (divided by $2ik$) of the two Jost solutions.  
 
An alternate perspective on this function $a(k;q)$ was introduced by Jost and Pais \cite{MR0044404}.  They observed that it could be expressed as a Fredholm determinant.
In \cite[Chapter~5]{MR2154153}, Simon proves that 
\begin{equation}\label{JostPaisSimon}
a(k; q)= \det \bigl(1+ |q|^{\frac12} R_0(k) |q|^{-\frac12} q\bigr), \qtq{where} R_0(k) = (-\partial^2_x - k^2)^{-1},
\end{equation}
coincides with the Wronskian definition provided $\langle x\rangle^{1+\delta} q \in L^1$ with $\delta>0$.

Splitting $q$ across the two sides of $R_0$ is necessary if one wishes to treat $q$ with $L^1$-type singularities: neither $R_0 q$ nor $q R_0$ could be guaranteed to be bounded under Simon's hypothesis.  However, it turns out to be wiser to factor the free resolvent $R_0$, placing a square-root of this operator on either side of $q$; as we shall see, this will permit potentials with much more severe singularities.  On the other hand, one still needs strong decay hypotheses on $q$; for otherwise, the determinant would not be defined.

The second layer of renormalization needed to treat $q\in H^{-1}$ employs the renormalized determinant introduced by Hilbert \cite{Hilbert}; see \cite[Chapter~9]{MR2154153}.  Combining these two ideas, we are led to consider the following: For $k\in \C^+ = \{z \in \C : \Im z >0\}$ and Schwartz-class $q$, 
\begin{align}\label{a ren}
a_{\ren} (k;q) := \det_2 \bigl(1+ \sqrt{R_0(k)}\, q\, \sqrt{R_0(k)} \bigr)= a(k;q)\exp\Bigl\{-\tfrac i{2k}\int q(x)\,dx\Bigr\}.
\end{align}
The square-root of the resolvent is defined via analytic continuation from the case $k=i\kappa$ with $\kappa>0$, in which case $R_0$ is positive definite (and we take the positive definite square-root).

To the best of our knowledge, this quantity was first considered by Rybkin.  In \cite{MR2683250}, he used it to give the first proof of a priori $H^{-1}$ bounds for solutions to \eqref{KdV}.   This approach was developed independently in \cite{KVZ}; alternate approaches to such a priori bounds can be found in \cite{MR3400442,MR3874652}.

The fact that $a_{\ren}$ extends continuously (indeed real-analytically) from $q\in\mathcal S$ to merely $q\in H^{-1}$ rests on the basic theory of such regularized determinants and the Hilbert--Schmidt estimate
\begin{align}\label{R I2}
\Bigl\| \sqrt{R_0(k)}\, q\, \sqrt{R_0(k)} \Bigr\|^2_{\mathfrak I_2} &\leq \frac{|k|}{[\Im k]^2}\int \frac{|\hat q(\xi)|^2}{\xi^2+4|k|^2}\,d\xi.
\end{align}
Indeed, the mapping $A\mapsto \det_2(1+A)$ is a complex-analytic function on $\I_2$ and
\begin{align}\label{HS det2 bnd}
\bigl| 1 - \det_2(1+A) \bigr| \lesssim  \| A\|_{\I_2} \exp\bigl\{  \| A\|_{\I_2}^2 \bigr\} ;
\end{align}
see \cite{MR2154153} for details.  Our justification for the bound \eqref{R I2} is quite simple.  We use the ideal property and the elementary bound
$$
|\xi^2-k^2|^{-1} \leq \tfrac{|k|}{\Im k} (\xi^2+|k|^2)^{-1} \qtq{for all} \xi\in\R
$$
to reduce matters to the $\kappa=|k|$ case of
\begin{align}\label{R I2 kappa}
\Bigl\| \sqrt{R_0(i\kappa)}\, q\, \sqrt{R_0(i\kappa)} \Bigr\|^2_{\mathfrak I_2} & = \frac{1}{\kappa} \int \frac{|\hat q(\xi)|^2}{\xi^2+4\kappa^2}\,d\xi
	\qtq{for all} \kappa>0.
\end{align}

In view of the importance of \eqref{R I2 kappa} for what follows, it will also be convenient to employ the notation
$$
\| f \|_{H^{-1}_\kappa}^2 := \int \frac{|\hat f(\xi)|^2}{\xi^2+4\kappa^2}\,d\xi.
$$

With these preliminaries set, we may now give our variational characterization of multisolitons:

\begin{theorem}[Variational characterization of multisolitons]\label{T:char}
Fix $N\geq 1$ and distinct positive parameters $\beta_1, \ldots , \beta_N$.  If $q\in H^{-1}$ satisfies
\begin{align}\label{aren 0}
a_{\ren}(k;q)=0 \qtq{for all} k\in\{i\beta_m:\ 1\leq m\leq N\},
\end{align}
then
\begin{align}\label{a_ren is max}
a_\ren(i\kappa,q) \leq \exp\biggl\{\, \sum_{m=1}^N \ln\bigl(\tfrac{\kappa-\beta_m}{\kappa+\beta_m}\bigr) +\tfrac{2\beta_m}{\kappa}\biggr\}  \quad\text{for all}\quad \kappa\geq 1 + \|q\|_{H^{-1}}^2 .
\end{align}
If equality holds in \eqref{a_ren is max} for any one such $\kappa$, then $q=Q_{\vec \beta, \vec c}$ for some $\vec c\in \R^N$.
\end{theorem}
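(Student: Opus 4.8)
The plan is to reduce \eqref{a_ren is max} to a representation of $\log a_\ren(i\kappa;q)$ as a sum over the discrete spectrum of $-\partial_x^2+q$ plus a manifestly non-positive correction, and then to read off both the bound and its rigidity from the two independent sources of that non-positivity. I would carry this out first for Schwartz-class $q$, where the classical scattering theory is available, and pass to $H^{-1}$ afterward using the continuity of $a_\ren$ furnished by \eqref{R I2}--\eqref{HS det2 bnd}.

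First I would recall that the zeros of $k\mapsto a_\ren(k;q)$ in $\C^+$ sit precisely at the points $i\gamma_j$, where $-\gamma_j^2$ runs over the finitely many negative eigenvalues of $-\partial_x^2+q$; thus the hypothesis \eqref{aren 0} says exactly that $\{\beta_1,\dots,\beta_N\}\subseteq\{\gamma_j\}$, with possibly additional eigenvalue parameters $\gamma_j$ present. The threshold $\kappa\geq 1+\|q\|_{H^{-1}}^2$ is there to guarantee $\gamma_j<\kappa$ for every $j$. Since $a(\cdot;q)$ is holomorphic on $\C^+$, tends to $1$ at infinity, and has boundary modulus $|a(\xi)|=|T(\xi)|^{-1}\geq1$ on $\R$, a Poisson--Jensen factorization gives
\[
\log a(i\kappa;q)=\sum_j\log\Bigl(\frac{\kappa-\gamma_j}{\kappa+\gamma_j}\Bigr)+\frac{1}{\pi}\int_{\R}\frac{\kappa}{\xi^2+\kappa^2}\log|a(\xi)|\,d\xi.
\]
Combining this with the first trace formula $\int q\,dx=-4\sum_j\gamma_j+\tfrac{2}{\pi}\int_{\R}\log|a(\xi)|\,d\xi$ and the defining relation \eqref{a ren} (at $k=i\kappa$ the renormalizing factor is $\exp\{-\tfrac1{2\kappa}\int q\,dx\}$), the $\kappa^{-1}$ contributions rearrange into
\[
\log a_\ren(i\kappa;q)=\sum_j\Bigl[\log\Bigl(\frac{\kappa-\gamma_j}{\kappa+\gamma_j}\Bigr)+\frac{2\gamma_j}{\kappa}\Bigr]-\frac{1}{\pi}\int_{\R}\frac{\xi^2}{\kappa(\xi^2+\kappa^2)}\log|a(\xi)|\,d\xi.
\]

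Both corrections now have a definite sign. The integral is non-negative because $|a(\xi)|\geq1$, so it only lowers $a_\ren$. For the discrete sum I would set $f(\gamma)=\log\tfrac{\kappa-\gamma}{\kappa+\gamma}+\tfrac{2\gamma}{\kappa}$ and note $f(0)=0$ together with $f'(\gamma)=-\tfrac{2\gamma^2}{\kappa(\kappa^2-\gamma^2)}<0$ on $(0,\kappa)$; hence $f<0$ there. Because $f\leq0$, discarding the surplus eigenvalue terms only raises the sum, so $\sum_j f(\gamma_j)\leq\sum_{m=1}^N f(\beta_m)$, which is exactly the right-hand side of \eqref{a_ren is max}. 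The rigidity is then immediate: equality at a single $\kappa$ forces both non-positive corrections to vanish, i.e.\ $|a(\xi)|\equiv1$ (the potential is reflectionless) and $\{\gamma_j\}=\{\beta_m\}$ (no surplus eigenvalues), and by the inverse-scattering characterization a reflectionless potential with discrete spectrum exactly $\{-\beta_m^2\}$ is $Q_{\vec\beta,\vec c}$, the residual $N$ parameters $\vec c$ being the norming constants.

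The main obstacle is the passage to $q\in H^{-1}$, where none of the scattering quantities ($\log|a(\xi)|$, the reflection coefficient, the eigenvalue count) are a priori meaningful. For the inequality I would approximate $q$ by Schwartz $q_n\to q$ in $H^{-1}$: the eigenvalues of $-\partial_x^2+q_n$ converge to those of $-\partial_x^2+q$, so for each $m$ there is a (for large $n$, simple and distinct) eigenvalue parameter $\gamma_{j(m)}(q_n)\to\beta_m$; using $f\leq0$ to drop all other terms, together with the continuity of $a_\ren$, I would pass to the limit
\[
\log a_\ren(i\kappa;q)=\lim_{n\to\infty}\log a_\ren(i\kappa;q_n)\leq\lim_{n\to\infty}\sum_{m=1}^N f\bigl(\gamma_{j(m)}(q_n)\bigr)=\sum_{m=1}^N f(\beta_m).
\]
The equality case is the genuinely delicate point, since rigidity cannot be read off from the limit alone. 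I expect it to require establishing the representation above directly at $H^{-1}$ regularity---which is plausible because the boundary modulus is intrinsic, $|a_\ren(\xi)|=|a(\xi)|$ on $\R$---or else a separate argument showing that equality forces $q$ into the smooth reflectionless class where the inverse-scattering rigidity already applies.
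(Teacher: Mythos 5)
Your treatment of the inequality \eqref{a_ren is max} is essentially the paper's argument: for Schwartz $q$ you factor out the Blaschke product over the zeros and represent the remaining outer factor by a positive measure on $\R$ (the paper uses the Herglotz representation, you use Poisson--Jensen; the resulting identity is the paper's \eqref{alpha with mu}), the trace formula absorbs the $1/\kappa$ renormalization, both correction terms have a definite sign, surplus eigenvalues are discarded using the sign of $G$, and the $H^{-1}$ case follows by Schwartz approximation, Hurwitz-type tracking of the zeros, and continuity of $\alpha$. That half is sound and matches the paper.

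The genuine gap is the rigidity statement, which you explicitly leave open (``I expect it to require\dots''). This is not a technicality: it is the climax of the paper's proof and occupies most of Section~\ref{S:2}. Your first suggested route --- establishing the boundary representation directly at $H^{-1}$ regularity via the modulus $|a(\xi)|$ on $\R$ --- is precisely what the paper identifies as unavailable: there is no scattering theory in $H^s$ spaces, and for $q\in H^{-1}$ the boundary behaviour of $a_\ren$ on the real axis is not under control (slow decay, possible embedded or singular continuous spectrum), so ``the boundary modulus is intrinsic'' does not get off the ground. Your second route is the one the paper actually takes, but it requires a substantial argument you have not supplied: one first upgrades equality at a single $\kappa$ to equality at all admissible $\kappa$ (Corollary~\ref{C:min alpha}, via the approximating measures $d\mu_n$), deduces $q\in L^2$ from \eqref{alpha to L2}, proves linear independence of the squared eigenfunctions $\psi_m^2$ (the gradients of the constraints), derives the Euler--Lagrange equation $\tfrac1{2\kappa}-g(\kappa;q)=\sum_m\lambda_m\psi_m^2$, applies the operator $-\partial^3+2\partial q+2q\partial+4\kappa^2\partial$ to express $q$ itself as a linear combination of the $\psi_m^2$, and then bootstraps to smoothness and exponential decay; only at that point can the classical inverse-scattering uniqueness (Proposition~\ref{P:DT}) be invoked. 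Without some version of this regularity-and-decay upgrade, the equality case for general $q\in H^{-1}$ remains unproved.
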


By itself, Theorem~\ref{T:char} does not provide stability: one would also need to know that profiles that almost optimize \eqref{a_ren is max} are close to actual optimizers (i.e., to multisolitons).  This leaves us with a very clear ambition of a purely variational character: prove that optimizing sequences converge to the manifold of multisolitons.

We cannot expect optimizing sequences to have convergent subsequences --- the manifold of optimizers is not compact!  This problem arises already in the case of single solitons, due to the translation symmetry.  In the one-soliton case, compactness can be restored by incorporating translations.  This approach was convincingly demonstrated by Cazenave and Lions \cite{MR0677997}, who proved orbital stability of ground-state solitary waves for a variety of NLS-like equations.  Their paper is a major inspiration for what follows.

In the multisoliton case, compactness cannot be restored by translation alone.  Indeed the long-time dynamics of the multisolitons themselves is to break into asymptotically well-separated one-solitons.  We need a profile decomposition!  However, unlike most applications of this concentration-compactness technique, there is no sub-additivity in our problem: dichotomy must be embraced, not refuted.  As we will discuss, this is just one of several subtle aspects to our implementation of this classic concentration-compactness device.

We should note that the scenario of asymptotically well-separated one-solitons is not the only manner in which dichotomy can arise for optimizing sequences (or indeed sequences of optimizers).  One may have asymptotically well-separated \emph{multi}solitons.  This `gas of molecules' scenario will be analyzed in Section~\ref{S:MS}, where we show that a linear combination of well-separated multisolitons can be well-approximated by a single exact multisoliton.

Further ways in which our concentration-compactness analysis diverges from the other examples we know are (i) we are working in trace ideals, not Lebesgue spaces;
(ii) while we do have local compactness, this is non-quantitative arising from mere equicontinuity; and (iii) the constraints are apportioned across the profiles in an exotic manner.  We will discuss each of these in succession.

Trace ideals (which are also known as non-commutative $\ell^p$ spaces) have an additional defect of compactness beyond those of sequence spaces, namely, unitary conjugation.  This in an infinite-dimensional group.

Local compactness is not a prerequisite for concentration-compactness methods; indeed, with the incorporation of scaling parameters, such methods have proven to be extremely useful in scaling-critical problems.  Nevertheless, in the examples we know, local compactness is obtained from the Rellich--Kondrashov Theorem.  In our case, however, there is no such quantitative principle.  We will be able to show that individual optimizing sequences are equicontinuous, but nothing more.

In the standard analyses, a constraint, such as on the total $L^2$ norm, is apportioned across the profiles in an additive manner: the mass of the sequence is the sum of the masses of the profiles, plus that of the remainder.  In our case, the constraints are vanishing of the perturbation determinant.  In Section~\ref{S:OS}, we will see that the profiles attendant to optimizing sequences share the constraints in a different way: different profiles satisfy different \emph{subsets} of the constraints.

The paper is organized as follows:  In Section~\ref{S:2}, we first develop the theory of the perturbation determinant a little further.  We then use this to prove Theorem~\ref{T:char}.  Our approach is this: Building on the existing theory of Schwartz-class potentials, we show that the upper-bound \eqref{a_ren is max} holds across all $q\in H^{-1}$.  Having first proved linear independence of the gradients of the constraints, we may analyze the  case of equality using the Euler--Lagrange equation.  Using this device, we show that optimizers are, in fact, Schwartz class.  We may then appeal to classical inverse scattering to deduce that $q$ is an exact multisoliton.

In Section~\ref{S:MS}, we show that well-separated linear combinations of multisolitons (which may arise as optimizing sequences) can be approximated by a single multisoliton.  This is notationally very cumbersome; nevertheless, we hope that the virtues of deforming $x$ into the complex plane and exploiting the determinantal relation \eqref{inductive Cauchy} shine through.

In Section~\ref{S:CC}, we develop a profile decomposition attendant to the functional $q\mapsto \alpha(\kappa;q)$, defined in \eqref{alpha defn}, applied to bounded and equicontinuous sequences in $H^{-1}$.  Structurally speaking, our approach is the one we advanced in \cite{MR3098643}, namely, to first prove an inverse inequality and then employ this inductively to extract profiles.

In Section~\ref{S:OS}, we prove Theorem~\ref{T:main}, arguing by contradiction. If the theorem were to fail, then there would exist a sequence of solutions $q_n$ so that the initial data $q_n(0)$ converges to the manifold of solitons, and a sequence of times $t_n$ so that $q_n(t_n)$ does not converge to the manifold of solitons.  Using the fact that $\alpha(q;\kappa)$ is conserved under the flow, we show that $q_n$ is an optimizing sequence for the variational problem described in Theorem~\ref{T:char}.  (Actually, this is not quite correct, the zeros may be slightly displaced.)  We then employ the profile decomposition of Section~\ref{S:CC} to show (after some work) that the optimizing sequence can be approximated by a linear combination of well-separated multisolitons.  This suffices to reach a contradiction because of the analysis in Section~\ref{S:MS}.

We prove Corollary~\ref{C:Hs} in Section~\ref{S:Hs}.   In doing so, we illustrate two basic methods for raising the regularity: (i) employing polynomial conservation laws and (ii) exploiting equicontinuity of orbits.  Both methods are applicable beyond the range claimed in Corollary~\ref{C:Hs}; however, the details become increasingly cumbersome as the regularity $s$ grows.

\subsection*{Acknowledgements} R. K. was supported by NSF grant DMS-1856755 and M.~V. by grant DMS-1763074.

%%%%%%%%%%%%%%%%%%%%%%%%%%%%%%%%%%%%%%%%%%%%%%%%%%%%%%%%%%%%%%%%%%%%%%%%%%%%%%%%%%%%%%%%%%%%%%%%%%%%%%%%%%%%%%%%%%%%%%%%%%%%%%
\section{Variational characterization of multisolitons}\label{S:2}
%%%%%%%%%%%%%%%%%%%%%%%%%%%%%%%%%%%%%%%%%%%%%%%%%%%%%%%%%%%%%%%%%%%%%%%%%%%%%%%%%%%%%%%%%%%%%%%%%%%%%%%%%%%%%%%%%%%%%%%%%%%%%%

The ultimate goal of this section is to prove Theorem~\ref{T:char}.  This will proceed in several stages.  First, we discuss the logarithm of $a_\ren$. Then we show that \eqref{a_ren is max} holds, first for Schwartz-class $q$ and then for general $q\in H^{-1}$.  The climax of the proof is showing that all $H^{-1}$ optimizers are, in fact, Schwartz class and then using this information to show that they must be multisolitons.

\begin{lemma}\label{L:alpha defn}
For $q\in H^{-1}$ and $\kappa\geq 1 + \|q\|_{H^{-1}}^2$, the series
\begin{align}\label{alpha defn}
\alpha(q;\kappa) := \sum_{\ell=2}^\infty \tfrac{1}{\ell} (-1)^\ell \tr\Bigl\{ \Bigl( \sqrt{R_0(i\kappa )}\, q\, \sqrt{R_0(i\kappa )}\, \Bigr)^\ell \Bigr\}
\end{align}
converges and
\begin{align}\label{arenalpha}
a_\ren(q;i\kappa) = \exp\{ - \alpha(q;\kappa)\}.
\end{align}
Moreover,
\begin{align}\label{alpha to L2}
\liminf_{\kappa\to\infty} 8\kappa^3 \alpha(i\kappa;q) = \|q\|_{L^2}^2,
\end{align}
with the understanding that  LHS\eqref{alpha to L2} is infinite if $q\notin L^2$.
\end{lemma}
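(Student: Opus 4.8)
The plan is to abbreviate $A = A(\kappa) := \sqrt{R_0(i\kappa)}\,q\,\sqrt{R_0(i\kappa)}$ and to lean on three structural facts. First, since $q$ is real and $R_0(i\kappa)=(-\partial^2+\kappa^2)^{-1}$ is positive, $A$ is self-adjoint. Second, by \eqref{R I2 kappa} it is Hilbert--Schmidt with $\|A\|_{\I_2}^2=\kappa^{-1}\|q\|_{H^{-1}_\kappa}^2\le\kappa^{-1}\|q\|_{H^{-1}}^2$, so that $\|A\|_{\op}\le\|A\|_{\I_2}\le\kappa^{-1/2}\|q\|_{H^{-1}}$. Third, for $\kappa\ge 1+\|q\|_{H^{-1}}^2$ this gives $\|A\|_{\I_2}^2\le\|q\|_{H^{-1}}^2/(1+\|q\|_{H^{-1}}^2)<1$, whence $\|A\|_{\op}<1$ and $1+A$ is invertible.

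Convergence of \eqref{alpha defn} and the identity \eqref{arenalpha} can then be handled together. For $\ell\ge 2$ the operator $A^\ell$ is trace class with $|\tr(A^\ell)|\le\|A\|_{\I_2}^2\,\|A\|_{\op}^{\ell-2}\le\|A\|_{\I_2}^{\ell}$, so the series defining $\alpha$ converges absolutely; the same estimate shows that $\sum_{\ell\ge 2}\tfrac{(-1)^{\ell+1}}{\ell}A^\ell$ converges in $\I_1$ to $\log(1+A)-A$. Invoking the standard regularized-determinant identity $\log\det_2(1+A)=\tr[\log(1+A)-A]$ (valid because $1+A$ is invertible; see \cite{MR2154153}), taking traces term by term, and matching signs against \eqref{a ren} yields $\log a_\ren=\sum_{\ell\ge2}\tfrac{(-1)^{\ell+1}}{\ell}\tr(A^\ell)=-\alpha$, which is \eqref{arenalpha}.

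For the asymptotics \eqref{alpha to L2} I would isolate the quadratic term. Self-adjointness gives the exact identity $\tr(A^2)=\|A\|_{\I_2}^2=\kappa^{-1}\|q\|_{H^{-1}_\kappa}^2$, so the leading contribution is
\[
M(\kappa):=4\kappa^3\tr(A^2)=\int\frac{4\kappa^2}{\xi^2+4\kappa^2}\,|\hat q(\xi)|^2\,d\xi.
\]
Since the integrand increases monotonically to $|\hat q(\xi)|^2$ as $\kappa\uparrow\infty$, monotone convergence gives $M(\kappa)\to\|q\|_{L^2}^2$, the value being $+\infty$ precisely when $q\notin L^2$.

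The remaining task, which I expect to be the main obstacle, is to show that the tail $E(\kappa):=8\kappa^3\sum_{\ell\ge3}\tfrac{(-1)^\ell}{\ell}\tr(A^\ell)$ is negligible \emph{relative to} $M(\kappa)$. This requires care because the crude bound $\|A\|_{\op}\lesssim\kappa^{-1/2}$ allows $|E(\kappa)|$ to grow a priori like $\kappa^{3/2}$, which would swamp $M(\kappa)$ in the case $q\notin L^2$. The resolution is to compare $E$ with $M$ rather than bound it outright: writing $A$ in its eigenbasis $\{\lambda_j\}$ and using $|\lambda_j|^3\le\|A\|_{\op}\lambda_j^2$,
\[
|E(\kappa)|\le 8\kappa^3\sum_{\ell\ge3}\tfrac1\ell\sum_j|\lambda_j|^\ell\le\frac{8\kappa^3}{3(1-\|A\|_{\op})}\sum_j|\lambda_j|^3\le\frac{2\|A\|_{\op}}{3(1-\|A\|_{\op})}\cdot 4\kappa^3\|A\|_{\I_2}^2=\eta(\kappa)\,M(\kappa),
\]
where $\eta(\kappa):=\tfrac{2\|A\|_{\op}}{3(1-\|A\|_{\op})}\to 0$ since $\|A\|_{\op}\to 0$. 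Thus $8\kappa^3\alpha=M(\kappa)+E(\kappa)$ lies between $(1-\eta)M(\kappa)$ and $(1+\eta)M(\kappa)$, and letting $\kappa\to\infty$ forces $8\kappa^3\alpha\to\|q\|_{L^2}^2$ in both the finite and the infinite case. This gives \eqref{alpha to L2} — indeed with the $\liminf$ upgraded to a genuine limit. The decisive gain is the extra factor of $\|A\|_{\op}$ present in the cubic and higher terms, which vanishes in the limit even though the naive magnitude of the tail does not.
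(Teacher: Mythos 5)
Your proposal is correct and follows essentially the same route as the paper: convergence and \eqref{arenalpha} come from the Hilbert--Schmidt bound \eqref{R I2 kappa} together with the standard $\log\det_2$ expansion, and \eqref{alpha to L2} is obtained by isolating the quadratic term $4\kappa^3\|A\|_{\I_2}^2=\int\frac{4\kappa^2|\hat q(\xi)|^2}{\xi^2+4\kappa^2}\,d\xi$ and bounding the tail \emph{relative} to it by a factor tending to zero, which is precisely the content of the paper's inequality \eqref{37deg}. Your use of the spectral decomposition and $\|A\|_{\op}$ in place of the cruder bound $|\tr(A^\ell)|\leq\|A\|_{\I_2}^\ell$ is a cosmetic variation, and your observation that the $\liminf$ is in fact a genuine limit is likewise implicit in \eqref{37deg}.
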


\begin{proof}
Convergence of the series \eqref{alpha defn} under this hypothesis on $\kappa$ follows immediately from \eqref{R I2 kappa}.  That exponentiating this series yields the renormalized determinant is well known; indeed, this is little more than the Newton--Girard relation between elementary and power-sum symmetric functions.

Employing \eqref{R I2 kappa} in the series \eqref{alpha defn} shows
\begin{align}\label{37deg}
\biggl| 8\kappa^3\alpha(q;\kappa) - \int \frac{4\kappa^2 |\hat q(\xi)|^2}{\xi^2+4\kappa^2}\,d\xi \biggr| \leq \frac{\| q\|_{H^{-1}}}{\sqrt{\kappa} -  \| q\|_{H^{-1}}} \int \frac{4\kappa^2 |\hat q(\xi)|^2}{\xi^2+4\kappa^2}\,d\xi,
\end{align}
from which \eqref{alpha to L2} follows immediately.
\end{proof}

Incidentally, we note the inequality \eqref{37deg} is actually the basis of the proof of a priori $H^{-1}$ bounds.  Indeed, combining this with a simple bootstrap argument shows that for $\kappa\geq 1 + 64\|q(0)\|_{H^{-1}_\kappa}^2$ and any $t\in\R$,
\begin{align}\label{37deg'}
\frac23 \int \frac{|\hat q(t, \xi)|^2}{\xi^2+4\kappa^2}\,d\xi \leq 2\kappa\alpha(\kappa; q(t)) = 2\kappa\alpha(\kappa; q(0))
	\leq \frac87 \int \frac{|\hat q(0, \xi)|^2}{\xi^2+4\kappa^2}\,d\xi .
\end{align}

Let us now recall some known facts about the reciprocal transmission coefficient $a(k;q)$ in the case $q$ is of Schwartz class.  The basic analytical facts listed below can be easily derived from the Wronskian definition of $a(k;q)$ and rigorous proofs can be found in many basic texts on scattering theory.  The claim \eqref{a soliton} is more serious.  While many introductory texts on the theory of solitons give at least a formal derivation of \eqref{E:Qbc} from the assumption that $a(k;q)$ takes the stated form, a rigorous treatment requires considerable care, especially on the question of uniqueness.  We recommend the paper \cite{MR0512420} of Deift and Trubowitz for a complete and self-contained presentation of the following (under rather weaker hypotheses):

\begin{prop}\label{P:DT}
Fix $q\in \mathcal S$.  Then $a(k;q)$ extends continuously to the closed upper half-plane.  It has finitely many zeroes in $\C^+$, all of which are simple and located on the imaginary axis.  Moreover,
\begin{gather}
|a(k;q)| \geq 1 \quad\text{for all} \quad k\in \R, \\
|a(k;q)- 1| = O\bigl(\tfrac1{|k|}\bigr) \quad\text{as} \quad |k|\to \infty \quad\text{uniformly for}\quad \Im k\geq 0,
\end{gather}
and we have the symmetry
\begin{equation}\label{conj symm}
\overline{ a(k;q) }= a(-\bar k; q) \quad\text{for all} \quad k\in \C^{+}.
\end{equation}

Finally, given distinct $\beta_1,\ldots,\beta_N\in(0,\infty)$ and $q\in \mathcal S$, 
\begin{align}\label{a soliton}
a(k; q)= \prod_{m=1}^N \frac{k-i\beta_m}{k+i\beta_m} \iff q\in\bigl\{ Q_{\vec \beta, \vec c}:\, \vec c\in \R^N\bigr\}.
\end{align}
\end{prop}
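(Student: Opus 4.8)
The plan is to build everything from the Wronskian definition $a(k;q)=\tfrac{1}{2ik}W\bigl(f_-(\cdot,k),f_+(\cdot,k)\bigr)$, where $f_\pm(x,k)$ are the Jost solutions of $-f''+qf=k^2f$ normalized by $f_\pm(x,k)\sim e^{\pm ikx}$ as $x\to\pm\infty$. First I would construct these Jost solutions via the standard Volterra integral equations built from $R_0$; for Schwartz $q$ these converge and show that $m_\pm(x,k):=e^{\mp ikx}f_\pm$ are analytic in $k\in\C^+$, jointly continuous up to $\Im k=0$, and tend to $1$ as $|k|\to\infty$. Consequently $a$ is analytic on $\C^+$ and extends continuously to the closed half-plane (the Schwartz decay controlling the behaviour near $k=0$). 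The conjugation symmetry \eqref{conj symm} then follows because reality of $q$ gives $\overline{f_\pm(x,k)}=f_\pm(x,-\bar k)$, a symmetry inherited by $W/(2ik)$.

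For the zeros, I would use that a zero $k_0\in\C^+$ forces $W(f_-,f_+)(k_0)=0$, i.e. $f_-(\cdot,k_0)$ and $f_+(\cdot,k_0)$ are proportional; since both decay exponentially at their respective infinities, this common solution is an $L^2$ eigenfunction of the self-adjoint operator $-\partial^2+q$ with eigenvalue $k_0^2$. Self-adjointness forces $k_0^2\in\R$, and lying below the continuous spectrum $[0,\infty)$ forces $k_0^2<0$, hence $k_0\in i(0,\infty)$. Finiteness of the number of such zeros is finiteness of the negative discrete spectrum, which for Schwartz $q$ follows from a Bargmann-type bound; simplicity I would obtain from the standard trace-type identity expressing $\dot a(k_0)$ through the nonzero $L^2$-norm of the eigenfunction, which shows $\dot a(k_0)\neq 0$.

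The bound $|a(k)|\geq 1$ on $\R$ is cleanest from the scattering relations: writing $a=1/T$ with $|T|^2+|R|^2=1$ (unitarity of the scattering matrix) gives $|a|^2=(1-|R|^2)^{-1}\geq 1$. The asymptotic $|a(k)-1|=O(1/|k|)$ I would read off from \eqref{a ren}: since $a_\ren=\det_2\bigl(1+\sqrt{R_0}\,q\,\sqrt{R_0}\bigr)$ differs from $1$ by $O\bigl(\|\sqrt{R_0}\,q\,\sqrt{R_0}\|_{\I_2}^2\bigr)=O(1/|k|^2)$ by \eqref{HS det2 bnd} and \eqref{R I2}, while the exponential prefactor contributes the leading term $\tfrac{i}{2k}\int q$, uniformly for $\Im k\geq 0$.

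This leaves the soliton identity \eqref{a soliton}, which is the heart of the matter and where I expect the main obstacle. The forward implication I would verify by direct computation: the explicit Jost solutions of a reflectionless potential yield $|a|\equiv 1$ on $\R$, simple zeros exactly at $k=i\beta_m$, poles at $-i\beta_m$, and $a\to 1$ at infinity, which pins $a$ to the stated Blaschke-type product (the quotient of $a$ by the product being analytic, zero-free, unimodular on $\R$, and tending to $1$, hence identically $1$). The reverse implication is the delicate point: the product form forces $|a|\equiv 1$ on $\R$, hence $R\equiv 0$, while its simple zeros $i\beta_m$ identify the discrete spectrum as $\{-\beta_m^2\}$. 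One must then invoke the \emph{uniqueness} theorem of inverse scattering --- a Schwartz potential is determined by its reflection coefficient, eigenvalues, and norming constants --- to conclude that $q$ is reconstructed from this reflectionless data, and that solving the resulting finite-rank Gelfand--Levitan--Marchenko system produces precisely the determinantal family $Q_{\vec\beta,\vec c}$, with $\vec c$ parametrizing the norming constants. This uniqueness, together with the fact that the $\vec c$-family exhausts all reflectionless Schwartz potentials with the given eigenvalues, is exactly the subtle ingredient I would import wholesale from Deift--Trubowitz \cite{MR0512420} rather than reprove.
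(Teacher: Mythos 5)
Your outline is correct and follows essentially the same route as the paper, which does not prove Proposition~\ref{P:DT} itself but records it as standard scattering theory derivable from the Wronskian definition, deferring the serious point --- the uniqueness/reconstruction underlying \eqref{a soliton} --- to Deift--Trubowitz \cite{MR0512420}, exactly the ingredient you also identify as the one to import wholesale. The only small caveat is that your secondary argument for the uniform $O(1/|k|)$ asymptotic via \eqref{R I2} degenerates as $\Im k\to 0$, but the Jost-solution asymptotics you establish first already give the uniform statement.
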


This does not address the value of the renormalized perturbation determinant for such multisolitons.  The missing ingredient is the following:
\begin{equation}\label{trace form0}
\int Q_{\vec \beta, \vec c}(x)\, dx= -\sum_{m=1}^N 4\beta_m.
\end{equation}
This is proved in both \cite{MR0336122} and \cite{MR0303132}.  One simple approach that explains the additive structure of RHS\eqref{trace form0} is this:  As the $\int q$ is conserved by the flow, the value of LHS\eqref{trace form0} can be determined from the value for $N$ well-separated single solitons.  Alternately, one may deduce this by comparing the large-$k$ asymptotics of LHS\eqref{a soliton} with those of $a(\kappa;q)$.  From the same references or by the same method, one can also find
\begin{align}\label{conserv of multi}
P\bigl(Q_{\vec\beta,\vec c}\bigr) = \tfrac{8}{3}\sum_m \beta_m^3 \qtq{and} H\bigl(Q_{\vec\beta,\vec c}\bigr) =-\tfrac{32}5\sum_m\beta_m^5.
\end{align}

Combining \eqref{a ren}, \eqref{arenalpha}, and \eqref{trace form0} shows
\begin{gather}
a_{\ren}(k; Q_{\vec \beta, \vec c}) = \prod_{m=1}^N \frac{k-i\beta_m}{k+i\beta_m} e^{{2i\beta_m}/{k}} \qtq{for all} k\in \C^+, \label{a ren Qbc} \\
\alpha(\kappa;Q_{\vec \beta, \vec c}) = -\sum_{m=1}^N \tfrac{2\beta_m}{\kappa} + \ln\bigl(\tfrac{\kappa-\beta_m}{\kappa+\beta_m}\bigr)
	\qtq{for all} \kappa\geq 1 + \|q\|_{H^{-1}}^2 .  \label{alpha Qbc}
\end{gather}
As Lemma~\ref{L:alpha defn} guarantees that $a_\ren$ is non-vanishing for $\kappa\geq 1 + \|q\|_{H^{-1}}^2$, the restriction on $\kappa$ guarantees $\kappa > \sup_m \beta_m$ and consequently, that RHS\eqref{alpha Qbc} is positive:
\begin{align}\label{G}
G\bigl(\tfrac{\beta_m}\kappa\bigr) :=  - \Bigl[ \tfrac{2\beta_m}{\kappa} + \ln\bigl(\tfrac{\kappa-\beta_m}{\kappa+\beta_m}\bigr) \Bigr]
	= \sum_{\ell\geq1}  \tfrac{2}{2\ell+1} \bigl(\tfrac{\beta_m}\kappa\bigr)^{2\ell+1} \geq 0.
\end{align}

Recalling \eqref{arenalpha}, we see that multisolitons achieve equality in \eqref{a_ren is max}.  We next show that this is indeed a bound for all $q$.  This will be done in two steps: first for $q\in\mathcal S$ and then for $q\in H^{-1}$:

\begin{prop}\label{P:min alpha}
For $q\in \mathcal S$ and $\kappa\geq 1 + \|q\|_{H^{-1}}^2$,
\begin{align}\label{E:min alpha}
\alpha(\kappa;q) \geq -\sum_{m=1}^N \Bigl[ \ln\bigl(\tfrac{\kappa-\beta_m}{\kappa+\beta_m}\bigr) +\tfrac{2\beta_m}{\kappa} \Bigr],
\end{align}
where $\{i\beta_m:\ 1\leq m\leq N\}$ enumerates the $0\leq N<\infty$ zeros of $a_\ren$.
\end{prop}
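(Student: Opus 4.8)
The plan is to recast \eqref{E:min alpha} as an upper bound for $\log a(i\kappa;q)$ and then exploit the fact that $a(\,\cdot\,;q)$ is holomorphic in $\C^+$ with controlled boundary modulus. Combining \eqref{arenalpha} with \eqref{a ren} at $k=i\kappa$ gives $\alpha(\kappa;q)=-\log a(i\kappa;q)+\tfrac{1}{2\kappa}\int q\,dx$, while \eqref{alpha Qbc} identifies the right-hand side of \eqref{E:min alpha} as $-\log B(i\kappa)-\sum_m\tfrac{2\beta_m}{\kappa}$, where I write $B(k):=\prod_{m=1}^N\tfrac{k-i\beta_m}{k+i\beta_m}$ for the Blaschke product built from the zeros of $a_\ren$. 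By Proposition~\ref{P:DT} these zeros are precisely the (simple) zeros of $a(\,\cdot\,;q)$ in $\C^+$, all on the imaginary axis, and Lemma~\ref{L:alpha defn} forces every $\beta_m<1+\|q\|_{H^{-1}}^2\le\kappa$, so that $B(i\kappa)>0$ and, by the symmetry \eqref{conj symm}, $a(i\kappa;q)>0$. After these reductions, \eqref{E:min alpha} becomes equivalent to
\[
\log\frac{a(i\kappa;q)}{B(i\kappa)}\ \le\ \tfrac{1}{2\kappa}\int q\,dx+\sum_{m=1}^N\tfrac{2\beta_m}{\kappa}.
\]

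To obtain this I would study the ratio $f:=a(\,\cdot\,;q)/B$, which is holomorphic and nowhere vanishing in $\C^+$ (the Blaschke factors cancel the zeros of $a$), continuous up to $\R$, with $|f|=|a|\ge1$ on $\R$ and $f(k)\to1$ as $|k|\to\infty$. The log-modulus of such a function is dominated by the Poisson integral of its boundary values, that is,
\[
\log\frac{a(i\kappa;q)}{B(i\kappa)}\ \le\ \frac1\pi\int_{\R}\frac{\kappa}{\xi^2+\kappa^2}\,\log|a(\xi;q)|\,d\xi,
\]
with equality when $f$ is outer. For Schwartz $q$ this equality is classical, but it is worth stressing that only the displayed inequality is needed. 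The integral converges because the conjugation symmetry \eqref{conj symm} makes $|a(\xi;q)|^2=a(\xi;q)a(-\xi;q)$ free of any $\xi^{-1}$ term, so that $\log|a(\xi;q)|=O(\xi^{-2})$. Concretely I would justify the representation by a contour integration over a large half-disk in $\C^+$, fixing the branch of $\log f$ that vanishes at infinity and using the uniform decay $a(k;q)-1=O(1/|k|)$ from Proposition~\ref{P:DT} to discard the semicircular arc.

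The inequality then drops out. Since $\log|a|\ge0$ on $\R$ and $\tfrac{\kappa}{\xi^2+\kappa^2}\le\tfrac1\kappa$, the Poisson integral is at most $\tfrac{1}{\pi\kappa}\int_{\R}\log|a(\xi;q)|\,d\xi$. It remains to evaluate this constant, and here the classical first trace formula gives $\tfrac1\pi\int_{\R}\log|a(\xi;q)|\,d\xi=\tfrac12\int q\,dx+\sum_m 2\beta_m$; alternatively one reads it off by matching the $1/\kappa$ coefficient as $\kappa\to\infty$ in the exact representation, using $a(i\kappa;q)=1+\tfrac1{2\kappa}\int q\,dx+O(\kappa^{-2})$ together with $B(i\kappa)=1-\sum_m\tfrac{2\beta_m}{\kappa}+O(\kappa^{-3})$. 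Substituting produces exactly the bound displayed at the end of the first paragraph, and unwinding the reductions yields \eqref{E:min alpha}.

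The one genuinely delicate point is the Poisson representation, since a priori $f$ could carry a singular inner factor. This is the step I expect to require the most care, but it is not a real obstruction: such a factor has modulus $\le1$ throughout $\C^+$, so it can only \emph{decrease} $\log|f(i\kappa)|$, and the displayed Poisson integral therefore remains a valid upper bound regardless. For the constant I would rely on the independently established trace formula rather than on asymptotic matching, so that the possible presence of a singular part never enters. With that understood, I anticipate the remaining work to be bookkeeping of a standard kind: verifying the decay of $a-1$ and of $\log|a|$, the positivity of $a(i\kappa;q)$ and $B(i\kappa)$, and the global branch of the logarithm.
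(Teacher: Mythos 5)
Your argument is correct and is essentially the paper's own proof: both form the zero-free ratio $a/B$, represent its logarithm by a positive measure on $\R$ (you via the Poisson integral of the boundary values $\tfrac1\pi\log|a(\xi)|\,d\xi$, the paper via an abstract Herglotz measure $d\mu$, which sidesteps your discussion of singular inner factors), extract the first trace formula by matching the $1/\kappa$ coefficients as $\kappa\to\infty$, and conclude from positivity --- your kernel bound $\tfrac{\kappa}{\xi^2+\kappa^2}\le\tfrac1\kappa$ is identical to the paper's observation that $\int\tfrac{t^2}{\kappa(t^2+\kappa^2)}\,d\mu(t)\ge0$. The only substantive difference is presentational.
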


\begin{proof}
Proposition~\ref{P:DT} shows that $a(k;q)$ has only finitely many zeros and all are simple.  Using these, we build the Blaschke product
$$
B(k)=\prod_{m=1}^N \frac{k-i\beta_m}{k+i\beta_m}.
$$
In the case $a(k;q)$ has no zeros, $B(k)\equiv 1$.

Using Proposition~\ref{P:DT} again, we see that $k\mapsto \ln\bigl| \frac{a(k;q)}{B(k)} \bigr|$ is harmonic on $\C^+$ and extends continuously to $\partial \C^+$.  Moreover,  
\begin{align}\label{prop of frac}
\ln\bigl| \tfrac{a(k;q)}{B(k)} \bigr|\geq 0 \quad \text{for all  $k\in \R$} \qquad \text{and}\qquad\ln\bigl| \tfrac{a(k;q)}{B(k)} \bigr|= O\bigl(\tfrac1{|k|}\bigr) \quad \text{as  $|k|\to \infty$}.
\end{align}
It follows from the maximum principle that this function is non-negative throughout~$\C^+$. 

The Herglotz Representation Theorem (cf. \cite[Theorem~3, \S59]{AG}) then guarantees 
\begin{align}\label{rep}
\ln\bigl[\tfrac{a(k;q)}{B(k)}\bigr] = -i \int_\R \tfrac{d\mu(t)}{t-k},
\end{align}
for some finite positive measure $d\mu$ on $\R$.  This measure is also even under $t\mapsto -t$; this is inherited from the symmetry \eqref{conj symm} enjoyed by both $a(k;q)$ and $B(k)$.

In this way, we see that for $\kappa\geq 1 + \|q\|_{H^{-1}}^2$,
\begin{align*}
-\ln a(i\kappa ;q) &= -\sum_m \ln \bigl(\tfrac{\kappa-\beta_m}{\kappa+\beta_m}\bigr)  + i\int\tfrac{t+i\kappa}{t^2+\kappa^2}\, d\mu(t)= -\sum_m \ln \bigl(\tfrac{\kappa-\beta_m}{\kappa+\beta_m}\bigr)  -\kappa\int\tfrac{d\mu(t)}{t^2+\kappa^2}.
\end{align*}
On the other hand, \eqref{a ren}, \eqref{arenalpha}, and \eqref{alpha to L2} show that as $\kappa\to\infty$,
\begin{align*}
\Bigl|-\ln a(i\kappa ;q) + \tfrac1{2\kappa}\int q(x)\, dx\Bigr| &= O(\kappa^{-3}).
\end{align*}
Combining these two observations, we deduce that
\begin{equation}\label{9:59}
\begin{aligned}
\int q(x)\, dx &= \lim_{\kappa\to \infty} \biggl[ \sum_m 2\kappa \ln \bigl(\tfrac{\kappa-\beta_m}{\kappa+\beta_m}\bigr)
	+ \int \tfrac{2\kappa^2}{t^2+\kappa^2}\, d\mu(t) \biggr] \\
&= -4\sum_m\beta_m + 2\int d\mu(t)
\end{aligned}
\end{equation}
and thence that\begin{align}\label{alpha with mu}
\alpha(\kappa;q) = -\ln a_{\ren}(i\kappa ;q) &= -\sum_{m=1}^N \Bigl[\ln\bigl(\tfrac{\kappa-\beta_m}{\kappa+\beta_m}\bigr) +\tfrac{2\beta_m}{\kappa}\Bigr] +\int\tfrac{t^2}{\kappa(t^2+\kappa^2)}\,d\mu(t),
\end{align}
for all $\kappa\geq 1 + \|q\|_{H^{-1}}^2$.  The claim \eqref{E:min alpha} follows since $d\mu\geq 0$.
\end{proof}

\begin{corollary}\label{C:min alpha}
Fix $N\geq 0$ and distinct positive parameters $\beta_1, \ldots , \beta_N$. Assume that $q\in H^{-1}$ satisfies
$$
a_{\ren}(i\beta_m;q)=0 \quad\text{for all}\quad 1\leq m\leq N.
$$
Then for $\kappa\geq 1 + \|q\|_{H^{-1}}^2$ we have
\begin{align}\label{E:C:min alpha}
\alpha(\kappa;q) \geq -\sum_{m=1}^N \ln\bigl(\tfrac{\kappa-\beta_m}{\kappa+\beta_m}\bigr) +\tfrac{2\beta_m}{\kappa}.
\end{align}
Moreover, if equality holds in \eqref{E:C:min alpha} for one such $\kappa$ then it holds for all such $\kappa$.
\end{corollary}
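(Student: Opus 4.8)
The plan is to bootstrap from the Schwartz-class assertion of Proposition~\ref{P:min alpha} to general $q\in H^{-1}$ by density, using the continuity of the perturbation determinant in $q$ together with Hurwitz's theorem to faithfully track the constrained zeros $i\beta_m$, and then to deduce the equality rigidity from a monotonicity-plus-analyticity argument. Throughout I write $\kappa_*:=1+\|q\|_{H^{-1}}^2$, abbreviate $G(\beta/\kappa):=-[\tfrac{2\beta}{\kappa}+\ln\tfrac{\kappa-\beta}{\kappa+\beta}]\ge0$ as in \eqref{G} (so that $G$ is increasing in its argument), and set $F(\kappa):=\alpha(\kappa;q)-\sum_{m=1}^N G(\beta_m/\kappa)$. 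In this language the two assertions become $F\ge0$ on $[\kappa_*,\infty)$ and the propagation of its zeros.

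First I would fix Schwartz functions $q_n\to q$ in $H^{-1}$. The Hilbert--Schmidt bound \eqref{R I2} shows that $\sqrt{R_0(k)}\,q_n\,\sqrt{R_0(k)}\to\sqrt{R_0(k)}\,q\,\sqrt{R_0(k)}$ in $\I_2$ uniformly for $k$ in compact subsets of $\C^+$; combined with \eqref{HS det2 bnd} this gives $a_\ren(\cdot;q_n)\to a_\ren(\cdot;q)$ locally uniformly on $\C^+$. Being a locally uniform limit of holomorphic functions, $a_\ren(\cdot;q)$ is itself holomorphic on $\C^+$, and it is not identically zero since $a_\ren(i\kappa;q)=e^{-\alpha(\kappa;q)}\ne0$ for $\kappa>\kappa_*$. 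Hurwitz's theorem then produces, for each $m$ and all large $n$, a zero of $a_\ren(\cdot;q_n)$ inside a small disk about $i\beta_m$; by Proposition~\ref{P:DT} this zero lies on the imaginary axis, say at $i\beta_m^{(n)}$ with $\beta_m^{(n)}\to\beta_m$, and because the $\beta_m$ are distinct these $N$ zeros are distinct for large $n$. Applying Proposition~\ref{P:min alpha} to $q_n$ and discarding all but these $N$ (nonnegative) contributions yields $\alpha(\kappa;q_n)\ge\sum_{m}G(\beta_m^{(n)}/\kappa)$; letting $n\to\infty$ and invoking continuity of $\alpha(\kappa;\cdot)$ on $H^{-1}$ and of $G$ gives \eqref{E:C:min alpha}.

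For the rigidity I would reuse the same approximants, now exploiting the exact representation \eqref{alpha with mu}. Writing $i\lambda_1^{(n)},\dots,i\lambda_{K_n}^{(n)}$ for the full list of zeros of $a_\ren(\cdot;q_n)$, it reads $\alpha(\kappa;q_n)=\sum_{j}G(\lambda_j^{(n)}/\kappa)+\int\frac{t^2}{\kappa(t^2+\kappa^2)}\,d\mu_n(t)$ with $\mu_n\ge0$. Separating the $N$ zeros $\lambda_m^{(n)}\to\beta_m$ from the remaining \emph{extra} zeros, the correction $\sum_m[G(\lambda_m^{(n)}/\kappa)-G(\beta_m/\kappa)]\to0$ pointwise, so that for every fixed $\kappa>\kappa_*$ one has $F(\kappa)=\lim_n\big[\sum_{\mathrm{extra}}G(\lambda_j^{(n)}/\kappa)+\int\frac{t^2}{\kappa(t^2+\kappa^2)}\,d\mu_n(t)\big]$. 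Each bracketed function is nonnegative and \emph{non-increasing} in $\kappa$, since both $G(\lambda/\kappa)$ and the kernel $\frac{t^2}{\kappa(t^2+\kappa^2)}$ decrease in $\kappa$ on the admissible range (all zeros $\lambda_j^{(n)}$ satisfy $\lambda_j^{(n)}<\kappa_*(q_n)<\kappa$ for large $n$). A pointwise limit of non-increasing functions is non-increasing, so $F$ is non-increasing on $(\kappa_*,\infty)$. If now $F(\kappa_0)=0$ for some admissible $\kappa_0$, then $0\le F(\kappa)\le F(\kappa_0)=0$ for all $\kappa\ge\kappa_0$, i.e. $F$ vanishes on $[\kappa_0,\infty)$. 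Finally, $F$ is real-analytic on $(\kappa_*,\infty)$ — the series \eqref{alpha defn} converges locally uniformly and defines a real-analytic function of $\kappa$, while the $G$-terms are manifestly real-analytic for $\kappa>\beta_m$ — so the identity theorem forces $F\equiv0$ on all of $(\kappa_*,\infty)$, and continuity extends this to $[\kappa_*,\infty)$. This is precisely the propagation of equality.

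The hard part will be the control of the zeros of the approximants: one must guarantee that the prescribed constraints $a_\ren(i\beta_m;q)=0$ are shadowed by genuine simple, imaginary-axis zeros of the Schwartz approximants (this is exactly where Hurwitz's theorem and Proposition~\ref{P:DT} are indispensable) and that the leftover extra zeros can only help. A secondary, more conceptual, point is that I deliberately avoid building a limiting measure representation of $F$ itself — which would be delicate, since the extra zeros and the absolutely continuous part enter through different kernels and tightness of $\mu_n$ is not evident — and instead extract from the approximation only the two soft consequences actually needed, namely monotonicity and real-analyticity. One must also keep in mind that the threshold $\kappa_*(q_n)=1+\|q_n\|_{H^{-1}}^2$ varies with $n$; since it converges to $\kappa_*$, every fixed $\kappa>\kappa_*$ is admissible for all large $n$, which is all the uniformity the argument requires.
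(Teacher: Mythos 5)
Your proof is correct. The first half — Schwartz approximants, locally uniform convergence of $a_\ren$, Hurwitz plus Proposition~\ref{P:DT} to shadow each constrained zero by a simple imaginary-axis zero $i\beta_m^{(n)}$ of the approximant, then Proposition~\ref{P:min alpha} with the extra zeros discarded by positivity of $G$ — is exactly the paper's argument. The rigidity step rests on the same foundation as the paper's (the representation \eqref{alpha with mu} for the approximants $f_n$), but you propagate the vanishing differently: you observe that the deficit $F$ is a pointwise limit of the nonnegative, non-increasing functions $F_n(\kappa)=\sum_{\mathrm{extra}}G(\lambda_j^{(n)}/\kappa)+\int\frac{t^2}{\kappa(t^2+\kappa^2)}\,d\mu_n(t)$, so that equality at $\kappa_0$ kills $F$ on $[\kappa_0,\infty)$, and then invoke real-analyticity of $F$ to extend the vanishing down to $\kappa_*$. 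The paper instead fixes an arbitrary admissible $\kappa$ and dominates the kernel $\frac{t^2}{\kappa(t^2+\kappa^2)}$ by a constant multiple of $\frac{t^2}{\kappa_0(t^2+\kappa_0^2)}$, concluding $\int\frac{t^2}{\kappa(t^2+\kappa^2)}\,d\mu_n\to0$ directly; this avoids any appeal to analyticity and works in both directions $\kappa\lessgtr\kappa_0$ at once. A small point in your favor: you explicitly fold the \emph{extra} zeros of $a(\cdot;f_n)$ into the deficit and note that $G(\lambda/\kappa)$ is also monotone in $\kappa$, whereas the paper's displayed argument mentions only the measure $d\mu_n$ and leaves the (analogous) treatment of the surplus zeros implicit. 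Both routes are sound; yours trades the elementary kernel comparison for monotonicity plus the identity theorem, which costs you the (true, but worth a line of justification) fact that $\kappa\mapsto\alpha(\kappa;q)$ is real-analytic on a neighborhood of $[\kappa_*,\infty)$ for general $q\in H^{-1}$.
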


\begin{proof}
Let $\{f_n\}_{n\geq 1}$ be a sequence of Schwartz functions that converge to $q$ in $H^{-1}$.  As the renormalized perturbation determinant is continuous on $H^{-1}$, we have
$$
\lim_{n\to \infty} a_{\ren}(k;f_n)=a_{\ren}(k;q) \quad\text{uniformly on compact subsets of $\C^+$}.
$$
Using Hurwitz's theorem and  \eqref{a ren}, we deduce that for each $1\leq m\leq N$ and $n$ sufficiently large there exits distinct $\beta_m^{(n)}$ so that
\begin{equation}\label{2:03}
a(i\beta_m^{(n)};f_n) = 0 \quad \text{and} \quad \lim_{n\to \infty}\beta_m^{(n)}=\beta_m.
\end{equation}

In view of \eqref{E:min alpha} and the positivity \eqref{G}, we find that
\begin{align}\label{2:04}
\alpha(\kappa;f_n) \geq -\sum_{m=1}^N \ln\Bigl(\tfrac{\kappa-\beta_m^{(n)}}{\kappa+\beta_m^{(n)}}\Bigr) +\tfrac{2\beta_m^{(n)}}{\kappa}\xrightarrow[n\to \infty]{} -\sum_{m=1}^N \ln\Bigl(\tfrac{\kappa-\beta_m}{\kappa+\beta_m}\Bigr) +\tfrac{2\beta_m}{\kappa}.
\end{align}
The claim \eqref{E:C:min alpha} now follows because $\alpha$ is continuous on $H^{-1}$.

Suppose now that equality holds in \eqref{E:C:min alpha} for some single value $\kappa_0\geq 1 + \|q\|_{H^{-1}}^2$. Let us write $d\mu_n$ for the measure representing $a(k;f_n)$ in the sense of \eqref{rep}.  It then follows from \eqref{alpha with mu} and \eqref{2:03} that
$$
\int\tfrac{t^2}{\kappa_0(t^2+\kappa_0^2)}\,d\mu_n(t) \to 0  \qtq{and thence that} \int\tfrac{t^2}{\kappa(t^2+\kappa^2)}\,d\mu_n(t) \to 0
$$
for every $\kappa >0$.  This in turn guarantees that equality holds in \eqref{E:C:min alpha} for every $\kappa\geq 1 + \|q\|_{H^{-1}}^2$.
\end{proof}

We are now ready to realize the ultimate goal of this section:

\begin{proof}[Proof of Theorem~\ref{T:char}]
In view of Corollary~\ref{C:min alpha}, it remains to show that if \eqref{aren 0} holds and
\begin{align}\label{alpha sat}
\alpha(\kappa;q) = -\sum_{m=1}^N \ln\bigl(\tfrac{\kappa-\beta_m}{\kappa+\beta_m}\bigr) +\tfrac{2\beta_m}{\kappa} \qtq{for all} \kappa\geq 1 + \|q\|_{H^{-1}}^2,
\end{align}
then $q=Q_{\vec \beta,\vec c}$ for some choice of $\vec c \in \R^N$.
The first step in the proof will be to show that all such optimizers $q$ belong to Schwartz class.  In the second step, we will prove that
\begin{align}\label{a phi goal} 
a(k; q)= \prod_{m=1}^N \frac{k-i\beta_m}{k+i\beta_m}.
\end{align}
In view of Proposition~\ref{P:DT}, this implies that $q$ is a multisoliton, thus completing the proof of the theorem.

From \eqref{alpha to L2}, \eqref{G}, and \eqref{alpha sat}, we see already that $q\in L^2$.   We will get further regularity and decay by studying the Euler--Lagrange equation satisfied by $q$.  To begin, we note that since $a_{\ren}(i\beta_m;q)=0$, there exist $\phi_m\in L^2$ such that
$$
\bigl(1+ \sqrt{R_0(i\beta_m)}q\sqrt{R_0(i\beta_m)}\bigr)\phi_m=0 \qtq{and} \|\phi_m\|_2=1.
$$
Writing $\psi_m:=\sqrt{R_0(i\beta_m)}\phi_m$ we obtain
\begin{align}\label{psi eq}
\bigl(-\partial^2 + q+\beta_m^2\bigr)\psi_m=0 \qtq{and} \|\psi_m\|_{H^1_{\beta_m}}=1.
\end{align}
Note also that the eigenvalue $-\beta_m^2$ must be simple.  Indeed, if there were two linearly independent eigenvectors, this would yield linearly independent solutions to \eqref{psi eq}, both belonging to $H^1$; this is inconsistent with constancy of the Wronskian.

As $q\in L^2$, we see that \eqref{psi eq} implies that $\psi_m\in H^2$ and so $\psi_m^2\in L^1\cap H^2$.  Moreover, a quick computation shows that
\begin{align}\label{op to psim2}
\bigl(-\partial^3+2\partial q + 2q\partial+4\kappa^2\partial\bigr) \psi_m^2 = 4(\kappa^2-\beta_m^2)(\psi_m^2)'
\end{align}
in $H^{-1}$ sense.

Next, we claim that the functions $\{\psi_m^2\}_{m=1}^N$ are linearly independent.  Indeed, assume (towards a contradiction) that there were a \emph{minimal} collection
$\Lambda\subseteq \{1, \ldots, N\}$ such that
\begin{align}\label{ld}
\sum_{m\in \Lambda} c_m \psi_m^2=0 \qquad\text{with}\qquad c_m\neq 0 \qtq{for all} m\in \Lambda.
\end{align}
Fixing some $n\in \Lambda$ and applying $\bigl(-\partial^3+2\partial q + 2q\partial+4\beta_n^2\partial\bigr)$ to \eqref{ld} and using \eqref{op to psim2} we obtain
$$
\sum_{m\neq n\in \Lambda}c_n (\beta_m^2-\beta_n^2)(\psi_m^2)'=0.
$$
As $\beta_1, \ldots, \beta_N$ are distinct and $\psi_m^2$ decay at infinity, this contradicts the minimality of the collection $\Lambda$.

The functions $\psi_m^2$ represent the gradients of the constraints $a_{\ren}(i\beta_m; \phi)=0$.  Indeed,
$$
\tfrac{\delta}{\delta q} a_{\ren}(i\beta_m;q) = \xdet_{\substack{\phi_m^{\perp}}}{\!}_2 \bigl(1+ \sqrt{R_0(i\beta_m)}q\sqrt{R_0(i\beta_m)}\bigr) \, \psi_m^2,
$$
where the subscript on $\det_2$ indicates the Hilbert space over which the renormalized determinant is computed.  Concretely, in this case this is the Hilbert space of functions orthogonal to $\phi_m$.  As the eigenvalues $-\beta_m^2$ are simple, the renormalized determinant over $\phi_m^\perp$ is non-zero.

The gradient of $\alpha$ is easily derived from the series \eqref{alpha defn}:
$$
\tfrac{\delta}{\delta q}\alpha(\kappa; q)=\tfrac1{2\kappa} - g(\kappa;q),
$$
where $g(\kappa;q)$ is the diagonal Green's function.  This is discussed in greater detail in \cite{KV}.  As $q\in L^2$, \cite[Proposition~A.2]{KV} shows that $\tfrac1{2\kappa} - g(\kappa;\phi)\in H^2$; we also have the long-known identity
$$
\bigl(-\partial^3+2\partial q + 2q \partial+4\kappa^2\partial\bigr) g(\kappa;q)=0 
$$
which holds in $H^{-1}$ sense (cf. \cite[Proposition~2.3]{KV}).

As the gradients $\psi_m^2$ of the constraints have been shown to be linearly independent, we deduce that the optimizer $q$ satisfies the Euler--Lagrange equation
\begin{align}\label{EL}
\tfrac1{2\kappa} - g(\kappa;q) = \sum_{m=1}^N\lambda_m\psi_m^2
\end{align}
for each $\kappa\geq 1+\|q\|_{H^{-1}}^2$ and some ($\kappa$-dependent) multipliers $\lambda_1, \ldots, \lambda_N\in \R$.  
Consequently, applying $\bigl(-\partial^3+2\partial q + 2q \partial+4\kappa^2\partial\bigr)$ to \eqref{EL} and using \eqref{op to psim2}, we deduce that
\begin{align*}
\tfrac1\kappa q' = \sum_{m=1}^N 4\lambda_m (\kappa^2-\beta_m^2)(\psi_m^2)' .
\end{align*}
However, $q\in L^2$ and $\psi_m^2\in H^2$; thus
\begin{align}\label{q from psi}
\tfrac1\kappa q = \sum_{m=1}^N 4\lambda_m (\kappa^2-\beta_m^2)\psi_m^2
\end{align}
and so $q\in H^2$.  By alternately applying \eqref{psi eq} and \eqref{q from psi}, we deduce that $q$ is infinitely smooth.

From \eqref{q from psi} we see that $q\in L^1$.  It then follows from \eqref{psi eq} that each eigenfunction decays exponentially; see \cite[\S3.8]{MR0069338}.  Applying \eqref{q from psi} again we deduce that $q$ decays exponentially.  Thus $q\in \mathcal S$.

It remains to prove \eqref{a phi goal}.  Now that we know $q\in\mathcal S$, we may deploy the technology used in the proof of Proposition~\ref{P:min alpha}.  First we note that \eqref{E:min alpha} and the positivity \eqref{G} guarantee that $a_\ren$ has no zeros beyond those prescribed in \eqref{aren 0}.  In this way, the representation \eqref{rep} yields
$$
a_\ren(k; q) = \exp\biggl\{ - i \int \frac{d\mu(t)}{t-k} \biggr\} \cdot \prod_{m=1}^N \frac{k-i\beta_m}{k+i\beta_m}
$$
for some finite positive measure $d\mu$ on $\R$.  On comparing \eqref{alpha with mu} and \eqref{alpha sat}, we see that any mass $d\mu$ has must be concentrated at the origin.  Combining this observation with \eqref{a ren}, we deduce that
\begin{equation*}
a(k;q) = \exp\biggl\{ \frac{i}{k} \int d\mu + \frac{i}{2k} \int q\,dx \biggr\} \prod_{m=1}^N \frac{k-i\beta_m}{k+i\beta_m} .
\end{equation*}
As the holomorphic function $a(k;q)$ admits a continuous extension to $\partial \C^+$, this forces $\int q(x)\, dx = - 2 \int d\mu$ and so \eqref{a phi goal} holds.
\end{proof}

%%%%%%%%%%%%%%%%%%%%%%%%%%%%%%%%%%%%%%%%%%%%%%%%%%%%%%%%%%%%%%%%%%%%%%%%%%%%%%%%%%%%%%%%%%%%%%%%%%%%%%%%%%%%%%%%%%%%%%%%%%%%%%
\section{Molecular decomposition of multisolitons}\label{S:MS}
%%%%%%%%%%%%%%%%%%%%%%%%%%%%%%%%%%%%%%%%%%%%%%%%%%%%%%%%%%%%%%%%%%%%%%%%%%%%%%%%%%%%%%%%%%%%%%%%%%%%%%%%%%%%%%%%%%%%%%%%%%%%%%

The principal goal of this section is to prove that linear combinations of well-separated multisolitons are close to the manifold of multisolitons.  We refer to this as a molecular decomposition building on the analogy of one-solitons to atoms and of multisolitons to molecules.  In fact, we will see that the eigenvalue parameters $\vec \beta^j$ of the molecules in this rarefied gas of multisolitons form a partition of the eigenvalue parameters of the single approximating multisoliton.  The interrelation of the position parameters $\vec c^j$ is much more subtle since it must accommodate the correct combination of phase-shifts.  

\begin{prop}\label{P:mol}
Let multisoliton parameters $\vec \beta^j$ and $\vec c^{\,j}$ be given for each  $1\leq j\leq J$, with no eigenvalue repeated.  For any $J$-tuple of sequences $x_n^j$ satisfying
\begin{equation}\label{mol apart}
\lim_{n\to \infty} \bigl(x_n^j- x_n^i\bigr)=\infty \quad\text{for all} \quad 1\leq i< j \leq J,
\end{equation}
there exists a sequence $\vec c_n$ so that setting $\vec\beta=\coprod \vec\beta^{j}$, we have
\begin{align}\label{2:22}
Q_{\vec\beta,\vec c_n}(x) - \sum_{j=1}^{J}Q_{\vec\beta^j,\vec c^j}(x-x_n^j) \longrightarrow 0
\end{align}
in $L^2(\R)$ sense as $n\to \infty$.
\end{prop}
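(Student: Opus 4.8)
My plan is to work directly with the determinantal (tau-function) representation. Expanding $\det[A(x)]$ over its principal minors and invoking the Cauchy determinant formula writes each multisoliton as $Q_{\vec\beta,\vec c}=-2(\ln\tau)''$, where
\[
\tau(x)=\sum_{S}C_S\prod_{\mu\in S}e^{-2\beta_\mu(x-c_\mu)},\qquad C_S=\det\bigl[\tfrac{1}{\beta_\mu+\beta_\nu}\bigr]_{\mu,\nu\in S}>0,
\]
the sum running over all subsets $S$ of the index set. Writing $[\mu]$ for the block containing $\mu$ and $S_j=S\cap\{\mu:[\mu]=j\}$, the Cauchy determinant factors as
\[
C_S=\prod_{j}C^{j}_{S_j}\cdot\prod_{\substack{\mu,\nu\in S\\ [\nu]>[\mu]}}\bigl(\tfrac{\beta_\mu-\beta_\nu}{\beta_\mu+\beta_\nu}\bigr)^2,
\]
into within-block Cauchy determinants $C^j_{S_j}$ and cross-block factors; this is the algebraic heart of the matter.

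First I would choose the positions $\vec c_n$ to absorb the phase shifts dictated by the cross-block factors: for $\mu$ in block $j$ set $(c_n)_\mu=x_n^{j}+c^{j}_\mu-\Delta c_\mu$ with the fixed shift $\Delta c_\mu=\tfrac{1}{\beta_\mu}\sum_{[\nu]>[\mu]}\ln\bigl|\tfrac{\beta_\mu-\beta_\nu}{\beta_\mu+\beta_\nu}\bigr|$. A direct computation then shows that $e^{-2\beta_\mu\Delta c_\mu}$ cancels exactly the cross-block contributions from pairs both present in $S$, leaving
\[
\tau_n(x)=\sum_S a_S(x)\,g_S,\qquad \sigma_n(x):=\prod_{j}\tau^{j}(x-x_n^{j})=\sum_S a_S(x),
\]
where $\tau^j$ is the tau function of $Q_{\vec\beta^j,\vec c^j}$, where $a_S(x)=\prod_j C^j_{S_j}\prod_{\mu\in S}e^{-2\beta_\mu(x-x_n^{[\mu]}-c^{[\mu]}_\mu)}>0$, and where the constants $g_S=\prod_{\mu\in S,\ \nu\notin S,\ [\nu]>[\mu]}\bigl(\tfrac{\beta_\mu-\beta_\nu}{\beta_\mu+\beta_\nu}\bigr)^{-2}\ge1$ equal $1$ precisely when $S$ has no inversion, i.e.\ no present index sitting in a lower block than some absent index. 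Since $\sum_j Q_{\vec\beta^j,\vec c^j}(\cdot-x_n^j)=-2(\ln\sigma_n)''$, the quantity in \eqref{2:22} equals $-2(\ln F_n)''$ with $F_n=\tau_n/\sigma_n$, and it remains to prove $(\ln F_n)''\to0$ in $L^2$.

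To this end I would prove two bounds and interpolate. Introduce the probability weights $p_S(x)=a_S/\sigma_n$ and $\tilde p_S\propto a_Sg_S$, and set $m_S=\sum_{\mu\in S}\beta_\mu$; the elementary identities $(\ln\sigma_n)''=4\,\mathrm{Var}_p(m)$ and $(\ln\tau_n)''=4\,\mathrm{Var}_{\tilde p}(m)$ give $(\ln F_n)''=4[\mathrm{Var}_{\tilde p}(m)-\mathrm{Var}_p(m)]$. For the $L^\infty$ bound, a swap argument shows every $S$ with $g_S>1$ is uniformly exponentially subdominant: it has a neighbour, obtained by inserting the absent higher-block index or deleting the present lower-block one according to the sign of $x$, whose $a$-weight exceeds $a_S$ by a factor $e^{\beta_0 d_n/2}$, where $\beta_0=\min_m\beta_m$ and $d_n=\min_{i<j}(x_n^j-x_n^i)\to\infty$ by \eqref{mol apart}. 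Hence $\sum_{g_S>1}p_S\lesssim e^{-\beta_0 d_n/2}$ uniformly in $x$, so $\|\tilde p-p\|_{\ell^1}\lesssim e^{-\beta_0 d_n/2}$ and, the variance being Lipschitz on the probability simplex, $\|(\ln F_n)''\|_{L^\infty}\lesssim e^{-\beta_0 d_n/2}$. For the $L^1$ bound, each of $(\ln\sigma_n)''$ and $(\ln\tau_n)''$ is nonnegative with integral $2m_{\mathrm{full}}$ (where $m_{\mathrm{full}}=\sum_\mu\beta_\mu$) by the fundamental theorem of calculus, since the weights concentrate on $S=\emptyset$ as $x\to+\infty$ and on the full index set as $x\to-\infty$; thus $\|(\ln F_n)''\|_{L^1}\le4m_{\mathrm{full}}$. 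Interpolating, $\|(\ln F_n)''\|_{L^2}^2\le\|(\ln F_n)''\|_{L^\infty}\|(\ln F_n)''\|_{L^1}\lesssim e^{-\beta_0 d_n/2}\to0$, which is exactly \eqref{2:22}.

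The main obstacle is the second paragraph: correctly apportioning the phase shifts. One must pin down the $n$-independent constants $\Delta c_\mu$ so that the cross-block factors cancel for pairs simultaneously present in $S$ while the residual factors $g_S$ remain $\ge1$ and equal $1$ on exactly those subsets that dominate in each spatial window. The $L^2$ estimate itself is then comparatively soft: the $L^\infty$ smallness reflects that inverted subsets never dominate, while the uniform $L^1$ bound, stemming from the monotonicity of $(\ln\tau)'$, compensates for the fact that the relevant spatial windows grow like $d_n$.
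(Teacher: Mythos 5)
Your argument is correct, and it reaches \eqref{2:22} by a genuinely different route than the paper, although the two proofs pivot on the same choice of positions: your $\Delta c_\mu$ reproduces exactly the shift in \eqref{E:c_n for mol}. The paper works at the level of the $N\times N$ matrix itself: it splits $A_{\vec\beta,\vec c_n}$ into a dominant block $B^{(j)}$ and an error $E^{(j)}$ adapted to the window around $x_n^j$ (Lemma~\ref{L:B&E}), evaluates $\det B^{(j)}$ via the Cauchy-like identity \eqref{inductive Cauchy} to recover the $j$-th molecule exactly \eqref{E:B j2}, upgrades the resulting convergence of $\ln\det$ to convergence of second derivatives by working in a complex neighbourhood of each window, and finally disposes of the complement of the windows using the exact $L^2$-identity $\|Q_{\vec\beta,\vec c_n}\|_{L^2}^2=\sum_j\|Q_{\vec\beta^j,\vec c^j}\|_{L^2}^2$ from \eqref{conserv of multi}. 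You instead expand the tau function over subsets, observe that the shift $\Delta c_\mu$ turns the cross-block Cauchy factors into the constants $g_S\geq 1$ supported on ``inverted'' subsets, and recast $-2(\ln\tau)''$ as four times a variance of $m_S=\sum_{\mu\in S}\beta_\mu$ under the Boltzmann-type weights $p_S$; the swap argument then gives a pointwise bound $O(e^{-\beta_0 d_n/2})$ \emph{uniform in} $x$, and interpolation against the universal $L^1$ bound $4\sum_\mu\beta_\mu$ (which is the same trace identity the paper invokes, here entering through the boundary values of $(\ln\tau)'$) closes the estimate. What your approach buys is an explicit exponential rate in the minimal separation $d_n$ and a proof that is entirely real-variable and combinatorial, with no complex-analytic step and no need to pass through local uniform convergence on growing windows; what the paper's buys is a more structural picture (the block decomposition identifies which solitons are ``active'' near each $x_n^j$, and the inductive determinant identity isolates the phase shifts cleanly), at the cost of the window-plus-conservation bookkeeping in \eqref{2:22'}--\eqref{on E_n}. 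All the individual steps you rely on -- the principal-minor expansion, the factorization of the Cauchy determinant across blocks, the identity $(\ln\tau)''=4\,\mathrm{Var}(m)$, the neighbour-comparison bound on inverted subsets, and the evaluation $\int(\ln\tau)''=2\sum_\mu\beta_\mu$ -- check out.
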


The decoupling requirement \eqref{mol apart} could be stated with absolute values without affecting the conclusion of the theorem.  However, ordering the translation  parameters from the start makes the proof much easier to explain.

The scenario analyzed here is something of a reverse of the long-time asymptotics of multisolitons.  In that scenario, one starts with a multisoliton
$Q_{\vec \beta,\vec c(t) }$, with the components of $\vec c(t)$ satisfying an analogue of \eqref{mol apart} as $t\to \infty$, and the goal is to find positions $x^j(t)$ so that $Q_{\vec \beta,\vec c(t) }$ can be approximated by a linear combination of one-solitons as $t\to \infty$.  Despite these differences, we still feel that our approach to treating the error terms could streamline discussions of that subject too. 
 
Each of the multisolitons appearing in \eqref{2:22} is defined via the determinant of a matrix and each matrix is potentially of a different size.  We need a prudent means of indexing all these matrices.  For each $1\leq j\leq N$, let $I^j$ denote (disjoint) index sets of size $\#\vec\beta^j$ (the number of entries in $\vec \beta^j$).  We will then use $I=\coprod I^j$ as our indexing set of size $\#\vec\beta$.

Our first application of these notations is to give a formula for the sequence $c_n$ needed for Proposition~\ref{P:mol}:  For $\mu\in I^j$,
\begin{equation}\label{E:c_n for mol}
 (c_n)_\mu = x_n^j + c^j_\mu -\tfrac{1}{\beta_\mu} \sum_\sigma \log \Bigl[ \tfrac{\beta_{\sigma}-\beta_\mu}{\beta_{\sigma} + \beta_\mu} \Bigr],
\end{equation}
where the sum extends over all $\sigma\in I^\ell$ for all $\ell>j$.

We also need to construct two families of matrices:
For fixed $1\leq j\leq J$ we define a matrix $B^{(j)}(x;\vec\beta,\vec c)$ indexed over $I\times I$ by
\begin{equation}\label{E:B table}
\mbox{\def\s{\vrule depth 1.5ex height 2.7ex width 0mm}
\begin{tabular}{|c|c|c|c|}
\hline
$B_{\mu\nu}^{(j)}(x;\vec\beta,\vec c) \s$ 	& $\nu\in I^\ell,\ \ell<j$ & $\nu\in I^j$ & $\nu\in I^\ell,\ \ell>j$ \\
\hline
$\mu\in I^\ell,\ \ell<j$\s	& $\delta_{\mu\nu}$ & $0$ & $0$ \\
\hline
$\mu\in I^j$\s 				& $0$ & $A_{\mu\nu}(x)$ & $\tfrac{1}{\beta_\mu+\beta_\nu} e^{-\beta_\mu(x-c_\mu)}$ \\
\hline
$\mu\in I^\ell,\ \ell>j$\s	& $0$ & $\tfrac{1}{\beta_\mu+\beta_\nu} e^{-\beta_\nu(x-c_\nu)}$  & $\tfrac{1}{\beta_\mu+\beta_\nu}$ \\
\hline
\end{tabular}
}\end{equation}
where $A_{\mu\nu}(x)$ is as in \eqref{matrix A}.  Similarly, we define
\begin{equation}\label{E:E table}
\mbox{\def\s{\vrule depth 1.5ex height 2.7ex width 0mm}
\begin{tabular}{|c|c|c|c|}\hline
$E_{\mu\nu}^{(j)}(x;\vec\beta,\vec c) \s$ 	& $\nu\in I^\ell,\ \ell<j$ & $\nu\in I^j$ & $\nu\in I^\ell,\ \ell>j$ \\
\hline
$\mu\in I^\ell,\ \ell<j$\s	& $A_{\mu\nu}(x)-\delta_{\mu\nu}$ & $A_{\mu\nu}(x)$ & $\tfrac{1}{\beta_\mu+\beta_\nu} e^{-\beta_\mu(x-c_\mu)}$ \\
\hline
$\mu\in I^j$\s 				& $A_{\mu\nu}(x)$ & $0$ & $0$ \\
\hline
$\mu\in I^\ell,\ \ell>j$\s	& $\tfrac{1}{\beta_\mu+\beta_\nu} e^{-\beta_\nu(x-c_\nu)}$ & $0$  & $\delta_{\mu\nu} e^{\beta_\mu(x-c_\mu)+\beta_\nu(x-c_\nu)}$ \\
\hline
\end{tabular}
}\end{equation}
As we shall see, $B^{(j)}$ is the dominant term for those $x$ near $x_n^k$, while $E^{(j)}$ functions as an error term.

\begin{lemma}\label{L:B&E}
Fix $L>0$.  Then under the hypotheses of Proposition~\ref{P:mol},
\begin{align}\label{B&E to 0}
\limsup_{n\to\infty} \, \bigl\| B^{(j)}(x_n^j+z;\vec\beta,\vec c_n) \bigr\| <\infty \qtq{and}
	\limsup_{n\to\infty}\, \bigl\| E^{(j)}(x_n^j+z;\vec\beta,\vec c_n) \bigr\| =0 
\end{align}
uniformly for $z\in[-L,L]+i[-1,1]$.  Moreover, for all $x\in\R$,
\begin{align}\label{Q from B E}
Q_{\vec \beta,\vec c_n}(x) = - 2\tfrac{d^2}{dx^2} \ln \det\bigl[B^{(j)}(x;\vec\beta,\vec c_n) + E^{(j)}(x;\vec\beta,\vec c_n) \bigr].
\end{align}
\end{lemma}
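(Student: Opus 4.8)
The plan is to treat the two assertions separately, beginning with the algebraic identity \eqref{Q from B E}, which I expect to be the easier one once the correct structure is spotted. The key observation is that the matrix sum $M:=B^{(j)}+E^{(j)}$ is nothing but a diagonal rescaling of the defining matrix $A(x)=A(x;\vec\beta,\vec c_n)$ of \eqref{matrix A}. Introduce the diagonal matrix $D(x)$ whose $\mu$-th entry is $w_\mu=1$ when $\mu\in I^k$ with $k\le j$ and $w_\mu=e^{\beta_\mu(x-(c_n)_\mu)}$ when $\mu\in I^k$ with $k>j$. Adding the two tables \eqref{E:B table} and \eqref{E:E table} block by block and comparing with \eqref{matrix A}, one checks in each of the nine blocks that $M_{\mu\nu}=w_\mu\,A_{\mu\nu}(x)\,w_\nu$; the factor $w_\mu w_\nu$ is exactly what converts $\tfrac{1}{\beta_\mu+\beta_\nu}e^{-\beta_\mu(x-c_\mu)-\beta_\nu(x-c_\nu)}$ into the tabulated off-diagonal entries and turns the diagonal $\delta_{\mu\nu}$ into $\delta_{\mu\nu}e^{\beta_\mu(x-c_\mu)+\beta_\nu(x-c_\nu)}$ in the lower-right block. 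Hence $M=DAD$ and $\det M=(\det D)^2\det A$. Since $\det D=\exp\bigl\{\sum_{\mu\in I^k,\,k>j}\beta_\mu(x-(c_n)_\mu)\bigr\}$ is the exponential of an affine function of $x$, its logarithm is annihilated by $\tfrac{d^2}{dx^2}$, so $-2\tfrac{d^2}{dx^2}\ln\det M=-2\tfrac{d^2}{dx^2}\ln\det A=Q_{\vec\beta,\vec c_n}$, which is precisely \eqref{Q from B E}.

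For the norm bounds \eqref{B&E to 0} I would substitute $x=x_n^j+z$ and track the exponent $\beta_\mu(x-(c_n)_\mu)$. Writing $(c_n)_\mu=x_n^k+d_\mu$ for $\mu\in I^k$, where $d_\mu$ is the $n$-independent constant supplied by \eqref{E:c_n for mol}, this exponent equals $\beta_\mu\bigl[(x_n^j-x_n^k)+(z-d_\mu)\bigr]$. The separation hypothesis \eqref{mol apart} gives $x_n^j-x_n^k\to+\infty$ for $k<j$, $=0$ for $k=j$, and $\to-\infty$ for $k>j$, while $z-d_\mu$ ranges over a fixed compact set; consequently $e^{-\beta_\mu(x-(c_n)_\mu)}$ decays when $k<j$, is bounded when $k=j$, and grows when $k>j$ (and reciprocally for $e^{+\beta_\mu(x-(c_n)_\mu)}$). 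Inspecting the two tables with this dictionary in hand, every nonzero entry of $B^{(j)}$ is seen to be either a constant ($\delta_{\mu\nu}$ or $\tfrac1{\beta_\mu+\beta_\nu}$), or a core entry $A_{\mu\nu}(x)$ with $\mu,\nu\in I^j$, or a coupling term whose exponential carries an index lying in $I^j$; all of these stay uniformly bounded over $z\in[-L,L]+i[-1,1]$. Dually, every nonzero entry of $E^{(j)}$ carries an exponential factor whose index lies in a molecule $k<j$ (upper-left, first-row and first-column, and mixed blocks) or $k>j$ (lower-right diagonal block), and hence tends to $0$. The whole design of the split is that the rescaling $D$ has cancelled every growing exponential, so no entry of $B^{(j)}$ or $E^{(j)}$ diverges. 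Because the matrices have fixed finite size, these entrywise statements upgrade at once to operator-norm bounds, yielding finiteness of the first limit and vanishing of the second; uniformity over the box holds because the only $n$-dependence sits in the scalar factors $e^{-\beta_\mu(x_n^j-x_n^k)}$ and their reciprocals, which converge independently of $z$, while the $z$-dependent factors are bounded on the box.

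The genuinely non-routine step is recognizing the rescaling $M=DAD$; after that, everything is bookkeeping governed by the sign of $x_n^j-x_n^k$, and I expect the main difficulty to be purely organizational, namely keeping the nine blocks of the two tables straight and verifying that the partition into the kept matrix $B^{(j)}$ and the discarded matrix $E^{(j)}$ coincides exactly with the partition into bounded versus vanishing entries. One small point worth flagging is that the constants $d_\mu$ furnished by \eqref{E:c_n for mol} involve $\log\tfrac{\beta_\sigma-\beta_\mu}{\beta_\sigma+\beta_\mu}$, which need not be real; this is harmless for the present lemma, since the bounds depend only on $\Re\bigl(x-(c_n)_\mu\bigr)$ together with the magnitudes of the tabulated exponentials, and the identity \eqref{Q from B E} is a purely algebraic consequence of the $DAD$ structure that holds for any choice of $\vec c_n$.
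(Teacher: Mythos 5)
Your proof is correct and takes essentially the same route as the paper: the paper likewise verifies \eqref{B&E to 0} entry by entry from the sign of $x_n^j-(c_n)_\mu$ (bounded for $\mu\in I^j$, $\to+\infty$ for $\ell<j$, $\to-\infty$ for $\ell>j$) and obtains \eqref{Q from B E} by extracting common row and column factors, i.e.\ exactly your $M=DAD$ identity, so that $\det A$ and $\det[B^{(j)}+E^{(j)}]$ differ by the exponential of an affine function of $x$. Two minor credits: your identification of the rescaled indices as those with $\ell>j$ is the correct one (the paper's displayed identity attributes the product to $\ell<j$, which appears to be a slip), and your observation that the $\log$ terms in \eqref{E:c_n for mol} may be complex but only the real part of $x-(c_n)_\mu$ matters for the bounds is a point the paper leaves implicit.
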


\begin{proof}
As we are dealing with finite matrices, our claims about the operator norm can be verified considering each matrix entry individually.  From this perspective, the claim \eqref{B&E to 0} follows simply from the behavior of $x_n^j - (c_n)_\mu$: this is bounded when $\mu\in I^j$; it diverges to $+\infty$ when $\mu\in I^\ell$ with $\ell<j$; and it diverges to $-\infty$ when $\mu\in I^\ell$ with $\ell>j$.

The claim \eqref{Q from B E} follows readily from the identity
$$
\det\bigl[ A_{\vec\beta,\vec c_n}(x) \bigr] =
 \det\bigl[B^{(j)}(x;\vec\beta,\vec c_n) + E^{(j)}(x;\vec\beta,\vec c_n) \bigr] \times \prod e^{-2\beta_\mu(x-c_\mu)}, 
$$
where the product is taken over those $\mu\in I^\ell$ for each $\ell<j$.  This product appears because common factors have been extracted from these rows and columns.
\end{proof}

As a stepping-stone to our analysis of $B^{(j)}$ in Lemma~\ref{L:B j}, we first make preparations for evaluating its determinant.  In the case $D_{\mu\nu}\equiv 0$, our next lemma relates two Cauchy determinants (as they are known); indeed, it provides the basic inductive step for the complete evaluation of such determinants.

\begin{lemma}[A Cauchy-like Determinant]\label{L:MCD} Given an $N\times N$ matrix $D$, real numbers $a_1,\ldots,a_N$, and positive $\beta_1,\ldots,\beta_{N+1}$, we define
$$
\tilde a_\mu = \tfrac{\beta_{N+1}-\beta_\mu}{\beta_{N+1} + \beta_\mu} \, a_\mu .
$$
Then we have the following identity between two determinants:
\begin{align}\label{inductive Cauchy}
\begin{vmatrix}
D_{\mu\nu} + \tfrac{a_\mu a_\nu}{\beta_\mu+\beta_\nu} & \tfrac{a_\mu}{\beta_\mu+\beta_{N+1}} \\[2mm] \tfrac{a_\nu}{\beta_{N+1}+\beta_\nu} & \tfrac{1}{\beta_{N+1}+\beta_{N+1}}
\end{vmatrix}
= \tfrac{1}{2\beta_{N+1}} \begin{vmatrix} D_{\mu\nu} + \tfrac{\tilde a_\mu\tilde a_\nu}{\beta_\mu+\beta_\nu} \end{vmatrix} .
\end{align}
On the right, we have an $N\times N$ determinant.  The one on the left is $(N+1)\times (N+1)$, with the extra row and column as indicated.
\end{lemma}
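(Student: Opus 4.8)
The plan is to recognize the left-hand side of \eqref{inductive Cauchy} as a bordered matrix and to reduce it by a single Schur-complement step, pivoting on the bottom-right entry. Write $M$ for the $(N+1)\times(N+1)$ matrix. Its lower-right scalar entry is $M_{N+1,N+1} = \tfrac{1}{2\beta_{N+1}}$, which is nonzero since $\beta_{N+1}>0$, so it is a legitimate pivot. The block-determinant formula then gives
\[
\det M = \tfrac{1}{2\beta_{N+1}}\, \det\bigl[\, M_{\mu\nu} - 2\beta_{N+1}\, M_{\mu,N+1}\, M_{N+1,\nu}\,\bigr]_{1\le\mu,\nu\le N},
\]
so the prefactor $\tfrac{1}{2\beta_{N+1}}$ on the right of \eqref{inductive Cauchy} appears automatically, and it remains only to identify the Schur complement with the claimed $N\times N$ matrix.

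Substituting the explicit border entries $M_{\mu,N+1} = \tfrac{a_\mu}{\beta_\mu+\beta_{N+1}}$ and $M_{N+1,\nu} = \tfrac{a_\nu}{\beta_{N+1}+\beta_\nu}$, the $(\mu,\nu)$ entry of the Schur complement becomes
\[
D_{\mu\nu} + a_\mu a_\nu\Bigl[ \tfrac{1}{\beta_\mu+\beta_\nu} - \tfrac{2\beta_{N+1}}{(\beta_\mu+\beta_{N+1})(\beta_{N+1}+\beta_\nu)}\Bigr].
\]
Thus the entire lemma collapses to the scalar partial-fraction identity
\[
\tfrac{1}{\beta_\mu+\beta_\nu} - \tfrac{2\beta_{N+1}}{(\beta_\mu+\beta_{N+1})(\beta_{N+1}+\beta_\nu)} = \tfrac{1}{\beta_\mu+\beta_\nu}\cdot\tfrac{(\beta_{N+1}-\beta_\mu)(\beta_{N+1}-\beta_\nu)}{(\beta_{N+1}+\beta_\mu)(\beta_{N+1}+\beta_\nu)},
\]
whose right-hand side, once multiplied by $a_\mu a_\nu$, is precisely $\tfrac{\tilde a_\mu\tilde a_\nu}{\beta_\mu+\beta_\nu}$ by the definition of $\tilde a$. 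To verify the identity I would clear denominators over the common denominator $(\beta_\mu+\beta_\nu)(\beta_\mu+\beta_{N+1})(\beta_{N+1}+\beta_\nu)$; the resulting numerator is $(\beta_\mu+\beta_{N+1})(\beta_{N+1}+\beta_\nu) - 2\beta_{N+1}(\beta_\mu+\beta_\nu)$, which expands and simplifies to $(\beta_{N+1}-\beta_\mu)(\beta_{N+1}-\beta_\nu)$, matching the right-hand side and finishing the proof.

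The computation is entirely elementary and there is no genuine obstacle. The only point requiring a little foresight is the decision to pivot on the $(N+1,N+1)$ entry rather than, say, to expand along the last row: this single choice produces both the clean scalar factor $\tfrac{1}{2\beta_{N+1}}$ and a rank-one correction to $D_{\mu\nu}+\tfrac{a_\mu a_\nu}{\beta_\mu+\beta_\nu}$ that factors perfectly as a product of a $\mu$-dependent and a $\nu$-dependent piece, namely exactly the M\"obius-type weights $\tfrac{\beta_{N+1}-\beta_\mu}{\beta_{N+1}+\beta_\mu}$ encoded in $\tilde a_\mu$. Recognizing in advance that the mixed term must recombine into such a product is what makes the one-line reduction possible.
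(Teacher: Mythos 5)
Your proof is correct, and it is essentially the same argument as the paper's: the paper performs the elimination as two successive row and column operations pivoting on the bottom-right entry, which is exactly the Schur-complement reduction you carry out in one step, and the partial-fraction identity you verify is the combination of the two identities the paper uses. The computation of the Schur complement entry and the factorization into the M\"obius weights $\tfrac{\beta_{N+1}-\beta_\mu}{\beta_{N+1}+\beta_\mu}$ check out.
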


\begin{proof}
This is a simple matter of applying row and column operations:  First we subtract $a_\mu$ times the bottom row of LHS\eqref{inductive Cauchy} from the $\mu^\text{th}$ row and use the identity
$$
\tfrac{1}{\beta_\mu+\beta_\nu} - \tfrac{1}{\beta_{N+1}+\beta_\nu} = \tfrac{\beta_{N+1}-\beta_\mu}{(\beta_{N+1}+\beta_\nu)(\beta_\mu+\beta_\nu)}.
$$
Extracting the common factor from the final column, this yields
$$
\text{LHS\eqref{inductive Cauchy}} = \tfrac{1}{2\beta_{N+1}} \begin{vmatrix} D_{\mu\nu} + \tfrac{\hat a_\mu\check a_\nu}{\beta_\mu+\beta_\nu} & \tfrac{\hat a_\mu}{\beta_\mu+\beta_{N+1}} \\[2mm]
\check a_\nu & 1 \end{vmatrix}
 \text{ with }  
$$
with $\hat a_\mu = (\beta_{N+1}-\beta_\mu) a_\mu$ and $\check a_\nu = \tfrac{a_\nu}{\beta_{N+1} + \beta_\nu}$.

Next we subtract $\check a_\nu$ times the last column from the $\nu^\text{th}$ and apply the identity
$$
\tfrac{1}{\beta_\mu+\beta_\nu} - \tfrac{1}{\beta_\mu + \beta_{N+1}} = \tfrac{\beta_{N+1}-\beta_\nu}{(\beta_\mu+\beta_\nu)(\beta_\mu+\beta_{N+1})} .
$$
The result then follows since the bottom row is now populated by zeros, excepting a one in the final position.
\end{proof}

\begin{lemma}\label{L:B j} Fix $L>0$. Under the hypotheses on Proposition~\ref{P:mol}, there exists $\delta>0$ so that
\begin{align}
\bigl| \, \det\bigl[B^{(j)}(x_n^j+z;\vec\beta,\vec c_n) \bigr] \bigr| &\gtrsim 1  \label{E:B j1}
\end{align}
uniformly for $n\in \N$ and $z\in[-L,L]+i[-\delta,\delta]$.  Moreover, for every $x\in\R$,
\begin{align}
- 2\tfrac{d^2}{dx^2} \ln \det\bigl[B^{(j)}(x;\vec\beta,\vec c_n) \bigr] &= Q_{\vec\beta^j,\vec c^j}(x-x^j_n). \label{E:B j2}
\end{align}
\end{lemma}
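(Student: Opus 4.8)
The plan is to reduce $\det B^{(j)}$ to an $I^j\times I^j$ Cauchy-type determinant by iterating the inductive step recorded in Lemma~\ref{L:MCD}. First, observe from the table \eqref{E:B table} that the rows and columns indexed by $I^\ell$ with $\ell<j$ form an identity block completely decoupled from the rest; hence $\det B^{(j)}$ equals the determinant of the block indexed by $\{\,\mu\in I^\ell:\ell\geq j\,\}$. Writing $a_\mu=e^{-\beta_\mu(x-(c_n)_\mu)}$ for $\mu\in I^j$ and $a_\sigma=1$ for $\sigma\in I^\ell$ with $\ell>j$, a glance at \eqref{E:B table} and \eqref{matrix A} shows this surviving block has the form $D_{\mu\nu}+\frac{a_\mu a_\nu}{\beta_\mu+\beta_\nu}$, where $D_{\mu\nu}=\delta_{\mu\nu}$ when $\mu,\nu\in I^j$ and $D_{\mu\nu}=0$ otherwise. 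In particular, the diagonal entry attached to every index with $\ell>j$ receives no contribution from $D$.

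Next I would eliminate the indices with $\ell>j$ one at a time via Lemma~\ref{L:MCD}, with the index $\sigma$ playing the role of $\beta_{N+1}$. Applying the lemma to the first such index extracts a factor $\frac{1}{2\beta_\sigma}$ and replaces every surviving coefficient $a_\mu$ by $\frac{\beta_\sigma-\beta_\mu}{\beta_\sigma+\beta_\mu}\,a_\mu$. The one point requiring care is that after the first step the remaining $\ell>j$ coefficients are no longer equal to $1$, so the lemma does not apply verbatim; however, because the associated $D$-diagonal vanishes, the entire row and column of such an index are proportional to its (now constant) coefficient, so I may rescale them back to the normalized form at the cost of an $x$-independent constant. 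Iterating, all indices with $\ell>j$ are removed, leaving the $I^j\times I^j$ determinant $\det\bigl[\delta_{\mu\nu}+\frac{\hat a_\mu\hat a_\nu}{\beta_\mu+\beta_\nu}\bigr]$ with $\hat a_\mu=a_\mu\prod_\sigma\frac{\beta_\sigma-\beta_\mu}{\beta_\sigma+\beta_\mu}$ (the product over all $\sigma\in I^\ell$, $\ell>j$), together with an overall prefactor $C$ independent of both $x$ and $n$.

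It remains to identify this reduced determinant. The key elementary observation is that $\det\bigl[\delta_{\mu\nu}+\frac{a_\mu a_\nu}{\beta_\mu+\beta_\nu}\bigr]=\det\bigl(I+C_0 D_a^2\bigr)$, where $C_0$ is the Cauchy matrix $\frac{1}{\beta_\mu+\beta_\nu}$ and $D_a=\mathrm{diag}(a_\mu)$; thus it depends only on the squares $a_\mu^2$. This renders the sign of $\hat a_\mu$ irrelevant, and a direct computation using \eqref{E:c_n for mol} gives $\hat a_\mu^2=e^{-2\beta_\mu(x-x_n^j-c_\mu^j)}$, the factors $\prod_\sigma\frac{\beta_\sigma-\beta_\mu}{\beta_\sigma+\beta_\mu}$ cancelling precisely the phase shift built into $(c_n)_\mu$. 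Hence the reduced determinant equals $\det\bigl[A_{\vec\beta^j,\vec c^j}(x-x_n^j)\bigr]$, so that $\det B^{(j)}(x;\vec\beta,\vec c_n)=C\,\det\bigl[A_{\vec\beta^j,\vec c^j}(x-x_n^j)\bigr]$. Claim \eqref{E:B j2} then follows upon applying $-2\frac{d^2}{dx^2}\ln(\,\cdot\,)$, since $\frac{d^2}{dx^2}\ln C=0$.

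Finally, \eqref{E:B j1} is immediate from this identity: setting $x=x_n^j+z$ gives $\det B^{(j)}(x_n^j+z;\vec\beta,\vec c_n)=C\,\det\bigl[A_{\vec\beta^j,\vec c^j}(z)\bigr]$, which does not depend on $n$ at all. For real $z$ the matrix $A_{\vec\beta^j,\vec c^j}(z)$ is of the form $I+D\,C_0\,D$ with $D$ a positive diagonal matrix, hence positive definite, so its determinant is $\geq 1$; since this determinant is entire in $z$ and $C\neq 0$ (the $\beta$'s being distinct), compactness of $[-L,L]$ yields a $\delta>0$ and a uniform lower bound $\gtrsim 1$ on the strip $[-L,L]+i[-\delta,\delta]$. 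I expect the main obstacle to be the bookkeeping in the second paragraph: verifying that the iterated reductions produce exactly the cumulative factor $\prod_\sigma\frac{\beta_\sigma-\beta_\mu}{\beta_\sigma+\beta_\mu}$ matching \eqref{E:c_n for mol}, while confirming that every prefactor generated along the way is independent of $x$ (hence harmless for \eqref{E:B j2}) and of $n$ (hence harmless for \eqref{E:B j1}).
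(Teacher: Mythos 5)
Your proof is correct and follows essentially the same route as the paper: iterate Lemma~\ref{L:MCD} to collapse $\det B^{(j)}$ onto the $I^j\times I^j$ block, match the resulting coefficients against \eqref{E:c_n for mol} to obtain \eqref{E:B j2}, and use positivity of the Cauchy-type matrix for real $z$. Your handling of the two fine points the paper glosses over --- renormalizing the $\ell>j$ coefficients between iterations and disposing of the sign ambiguity in $\hat a_\mu$ via $\det(I+D_aC_0D_a)=\det(I+C_0D_a^2)$ --- is sound, and your compactness argument on the complex strip (exploiting that $\det B^{(j)}(x_n^j+z)$ is independent of $n$) is a legitimate, slightly simpler alternative to the paper's eigenvalue-perturbation step.
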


\begin{proof}
Applying Lemma~\ref{L:MCD} iteratively, we find that
\begin{align}\label{E:dslfkjs}
\det\bigl[B^{(j)}(x;\vec\beta,\vec c_n) \bigr] = \Bigl(\prod_\sigma \tfrac{1}{2\beta_\sigma}\Bigr)
	\cdot \det\Bigl[ \delta_{\mu\nu} + \tfrac{\tilde a_\mu \tilde a_\nu}{\beta_\mu+\beta_\nu}  \Bigr]_{I^j\times I^j}
\end{align}
where the parameters $\tilde a_\mu$ (which also depend on $n$) are given by
$$
\tilde a_\mu = \exp\{-\beta_\mu(x-(c_n)_\mu)\} \cdot \prod_\sigma \tfrac{\beta_{\sigma}-\beta_\mu}{\beta_{\sigma} + \beta_\mu},
$$
and both products extend over all $\sigma\in I^\ell$ for all $\ell>j$.  Referring back to Definition~\ref{D:multisoliton} and \eqref{E:c_n for mol}, we see that we have succeeded in proving \eqref{E:B j2}.

When $z$ is real, the inequality \eqref{E:B j1} follows from \eqref{E:dslfkjs} because the matrix $C$ with entries
$
C_{\mu\nu}=\tfrac{\tilde a_\mu \tilde a_\nu}{\beta_\mu+\beta_\nu}  
$
is bounded and (strictly) positive definite (as is easily verified from \eqref{inductive Cauchy} and Sylvester's criterion).

To extend the bound to complex $z$, it suffices to show that we can choose $\delta>0$ so that every eigenvalue of the matrix $C$ has positive real part.  Writing $z=x+iy$, we see that it suffices to prove that 
$$
\sum \overline{\psi_\mu}  \bigl[\cos(\beta_\mu y)\cos(\beta_\nu y) - \sin(\beta_\mu y) \sin(\beta_\nu y) \bigr] \tfrac{\tilde a_\mu(x) \tilde a_\nu(x)}{\beta_\mu+\beta_\nu}  \psi_\nu \geq 0
$$
for every complex vector $\psi_\mu$.  Thus, we see that there is such a choice of $\delta>0$ because of the boundedness and positive-definiteness of $C$ for $z$ real. 
\end{proof}

\begin{proof}[Proof of Proposition~\ref{P:mol}]
Our first goal is to prove the following variant of \eqref{2:22}: 
\begin{align}\label{2:22'}
Q_{\vec\beta,\vec c_n}(x+x_n^j) \longrightarrow Q_{\vec\beta^j,\vec c^j}(x) \quad\text{as $n\to\infty$,}
\end{align}
uniformly for $x\in[-L,L]$ for each fixed $j$ and any fixed $L>0$.

Combining Cramer's rule with the Hadamard inequality, we find
\begin{align*}
\bigl\| B^{(j)}(x;\vec\beta,\vec c_n)^{-1} E^{(j)}(x;\vec\beta,\vec c_n) \bigr\|
	&\lesssim  \frac{\bigl\| B^{(j)}(x;\vec\beta,\vec c_n)\bigr\|^{\#\vec\beta-1} \bigr\| E^{(j)}(x;\vec\beta,\vec c_n) \bigr\|}
		{\bigl|\det B^{(j)}(x;\vec\beta,\vec c_n)\bigr|}
\end{align*}
where the implicit constant depends only on $\#\vec \beta$.  Thus, it follows from \eqref{E:B j1} and Lemma~\ref{L:B&E} that for each $L>0$ there is a $\delta>0$ so that
$$
\ln \det\bigl[B^{(j)}(x_n^j+z;\vec\beta,\vec c_n) + E^{(j)}(x_n^j+z;\vec\beta,\vec c_n) \bigr]
	= \ln \det\bigl[B^{(j)}(x_n^j+z;\vec\beta,\vec c_n) \bigr] + o(1)
$$
as $n\to\infty$ uniformly for $z\in [L,-L]+i[-\delta,\delta] $.  Because we have convergence in a \emph{complex} neighbourhood of each $x$, this convergence extends to all derivatives.  Thus \eqref{2:22'} follows from \eqref{Q from B E} and \eqref{E:B j2}.

From \eqref{2:22'} we may then infer that as $n\to\infty$,
\begin{align}\label{on E_n}
\int_{E_n} \Bigl| Q_{\vec\beta,\vec c_n}(x) - \sum_{j=1}^{J}Q_{\vec\beta^j,\vec c^j}(x-x_n^j)\Bigr|^2 \,dx  \longrightarrow 0,
\end{align}
where $E_n = \cup_j [x_n^j-L,x_n^j+L]$.  On the other hand, from \eqref{conserv of multi} we get
$$
\int_\R \bigl| Q_{\vec\beta,\vec c_n}(x)\bigr|^2\,dx = \sum_{\mu\in I} \tfrac{16}{3}\beta_\mu^3 =  \sum_{j=1}^{J} \int \bigl|Q_{\vec\beta^j,\vec c^j}(x-x_n^j)\bigr|^2 \,dx.
$$
Using this and \eqref{on E_n}  we find that the integral over the complementary region $E_n^c$ makes an asymptotically negligible contribution for $L$ large.  Thus \eqref{2:22} follows.
\end{proof}

%%%%%%%%%%%%%%%%%%%%%%%%%%%%%%%%%%%%%%%%%%%%%%%%%%%%%%%%%%%%%%%%%%%%%%%%%%%%%%%%%%%%%%%%%%%%%%%%%%%%%%%%%%%%%%%%%%%%%%%%%%%%%%
\section{Concentration compactness}\label{S:CC}
%%%%%%%%%%%%%%%%%%%%%%%%%%%%%%%%%%%%%%%%%%%%%%%%%%%%%%%%%%%%%%%%%%%%%%%%%%%%%%%%%%%%%%%%%%%%%%%%%%%%%%%%%%%%%%%%%%%%%%%%%%%%%%

The goal of this section is to develop a concentration-compactness principle for the functional $\alpha$ acting on bounded equicontinuous sequences in $H^{-1}$.

\begin{prop}[Concentration compactness principle]\label{P:CC}
Assume that $\{u_n\}_{n \geq 1}$ is a bounded and equicontinuous sequence in $H^{-1}$.  Passing to a subsequence there exist $J^*\in\{0,1,2,\ldots\}\cup \{\infty\}$, non-zero profiles $\phi^j\in H^{-1}$, and positions $x_n^j\in \R$ such that for any finite $0\leq J\leq J^*$ we have the decomposition
$$
u_n(x) = \sum_{j=1}^J \phi^j(x-x_n^j) + r_n^J(x)
$$
with the following properties: for each fixed $\kappa\geq 1+ \sup_n\|u_n\|_{H^{-1}}^2$, 
\begin{align}
&\lim_{J\to J^*}\lim_{n\to \infty} \tr \bigl\{ \bigl(\sqrt{R_0(i\kappa)}r_n^J\sqrt{R_0(i\kappa)}\bigr)^4\bigr\}=0\label{r to 0},\\
&\sup_J\lim_{n\to \infty} \Bigl[\alpha(\kappa; u_n) -  \sum_{j=1}^J\alpha(\kappa;\phi^j) - \alpha(\kappa;r_n^J) \Bigr]= 0,\label{alpha decoupling}\\
&\lim_{n\to \infty} |x_n^j- x_n^\ell|=\infty \quad\text{for all} \quad j\neq \ell. \label{decoup par}
\end{align}
Moreover, 
\begin{align}\label{E:alpha r}
\lim_{J\to J^*}\ \lim_{n\to \infty}\ \biggl| \alpha(\kappa; r_n^J) - \frac1{2\kappa}\int \frac{|\widehat{r_n^J}(\xi)|^2}{\xi^2+4\kappa^2}\, d\xi\biggr|=0.
\end{align}
\end{prop}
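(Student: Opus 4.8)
The plan is to follow the inverse-inequality-then-induction strategy of \cite{MR3098643}, transplanted to the trace-ideal setting. Throughout, abbreviate $T_q := \sqrt{R_0(i\kappa)}\,q\,\sqrt{R_0(i\kappa)}$; by \eqref{R I2 kappa} this is bounded in $\I_2$ uniformly over the sequence, and $\|T_q\|_{\I_2}^2=\tfrac1\kappa\|q\|_{H^{-1}_\kappa}^2$. The profile-detecting functional is $\tr(T_q^4)=\|T_q\|_{\I_4}^4$, exactly the quantity appearing in \eqref{r to 0}. The central step is an \emph{inverse inequality}: if $\{v_n\}$ is bounded and equicontinuous in $H^{-1}$ with $\limsup_n \tr(T_{v_n}^4)\geq\eps>0$, then, along a subsequence, there are positions $x_n\in\R$ and a nonzero $\phi\in H^{-1}$ with $v_n(\cdot+x_n)\rightharpoonup\phi$ in $H^{-1}$ and $\tr(T_\phi^4)\gtrsim\eps^{c}$, the constant depending only on the uniform $H^{-1}$-bound. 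I would prove this via a quantitative Lions-type dichotomy: if no translate of $v_n$ has a nonzero weak limit, then $\tr(T_{v_n}^4)\to 0$. The point is that $\tr(T_{v_n}^4)$ is a quadrilinear form in $v_n$ tested against the kernels of $\sqrt{R_0(i\kappa)}$, which carry a logarithmic singularity but decay exponentially off the diagonal. Equicontinuity confines the frequencies to a bounded set and the kernel decay confines the spatial interaction to a bounded window, so the functional reduces to an integral over a compact region of phase space; there, the qualitative local compactness furnished by equicontinuity (an Arzel\`a--Ascoli input, but with no rate) upgrades weak $H^{-1}$-convergence to convergence of the functional, which is then forced to vanish.

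Second, I would iterate. Set $r_n^0=u_n$; given $r_n^{J-1}$, stop and put $J^*=J-1$ if $\lim_n\tr(T_{r_n^{J-1}}^4)=0$, and otherwise apply the inverse inequality to extract $\phi^J$, $x_n^J$ and set $r_n^J=r_n^{J-1}-\phi^J(\cdot-x_n^J)$. Each $\phi^J$ is a weak $H^{-1}$-limit, hence $\kappa$-\emph{independent}, and standard weak-convergence bookkeeping gives the asymptotic Pythagorean decoupling $\|u_n\|_{H^{-1}_\kappa}^2=\sum_{j\leq J}\|\phi^j\|_{H^{-1}_\kappa}^2+\|r_n^J\|_{H^{-1}_\kappa}^2+o(1)$ as $n\to\infty$, which by \eqref{R I2 kappa} is the decoupling of squared $\I_2$-norms. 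Since the total $H^{-1}_\kappa$-mass is bounded and each extracted profile carries a definite amount, $\|T_{\phi^j}\|_{\I_2}\geq\|T_{\phi^j}\|_{\I_4}=\tr(T_{\phi^j}^4)^{1/4}\gtrsim\eps^{c}$, the series $\sum_j\|\phi^j\|_{H^{-1}_\kappa}^2$ converges and the extractable $\I_4$-content of the remainder must tend to $0$ as $J\to J^*$; this is \eqref{r to 0}. The separation \eqref{decoup par} is enforced inductively, overlapping positions being absorbed into a single profile. A diagonal argument over $n$ and a countable dense set of admissible $\kappa$, exploiting that the profiles and positions do not depend on $\kappa$, promotes \eqref{r to 0} to every admissible $\kappa$.

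The last two assertions then follow. For \eqref{E:alpha r}, the $\ell=2$ term of the series \eqref{alpha defn} equals precisely $\tfrac1{2\kappa}\int\tfrac{|\widehat{r_n^J}(\xi)|^2}{\xi^2+4\kappa^2}\,d\xi$, while the tail $\ell\geq3$ is controlled by interpolation in the trace ideals, namely $\|T\|_{\I_3}^3\leq\|T\|_{\I_2}\|T\|_{\I_4}^2$ and $\|T\|_{\I_\ell}^\ell\leq\|T\|_{\op}^{\ell-4}\|T\|_{\I_4}^4$ for $\ell\geq4$; by \eqref{r to 0} and the uniform $\I_2$-bound (which is below $1$ in the admissible range, making the geometric tail summable) this contribution vanishes. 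For \eqref{alpha decoupling}, inserting the decomposition into \eqref{alpha defn} and expanding, the diagonal contributions reproduce $\sum_{j\leq J}\alpha(\kappa;\phi^j)+\alpha(\kappa;r_n^J)$, while every cross term couples at least two objects whose supports are driven apart by \eqref{decoup par}; the exponential off-diagonal decay of the kernels of $\sqrt{R_0(i\kappa)}$ then sends each cross term to $0$ as $n\to\infty$.

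I expect the inverse inequality to be the main obstacle, and within it the passage from the lower bound on $\tr(T_{v_n}^4)$ to a nonzero weak $H^{-1}$-limit. Because we work in trace ideals rather than Lebesgue spaces and our only compactness is the non-quantitative local compactness coming from equicontinuity, no Rellich--Kondrashov inequality \emph{with a rate} is available; the vanishing lemma must instead be argued qualitatively, truncating the quadrilinear functional in both frequency and space before passing to the weak limit. Making these truncation errors uniform in $n$, and simultaneously uniform across the admissible range of $\kappa$, is the delicate point.
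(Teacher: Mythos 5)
Your overall architecture is the same as the paper's: an inverse inequality detecting a nonzero translated weak limit from a lower bound on $\tr(T_{u_n}^4)$, followed by inductive extraction with a quantitative drop in the $\I_4$-content of the remainder, with \eqref{E:alpha r} obtained exactly as you describe via $\|T\|_{\I_3}^3\leq\|T\|_{\I_2}\|T\|_{\I_4}^2$ and operator-norm control of the tail. Two points deserve comment.

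First, your mechanism for \eqref{alpha decoupling} has a genuine gap. You expand $\alpha(\kappa;u_n)$ after inserting the decomposition and claim every cross term ``couples at least two objects whose supports are driven apart by \eqref{decoup par}.'' That disposes of profile--profile cross terms (after approximating the $\phi^j$ by compactly supported functions), but it says nothing about the cross terms coupling a profile $\phi^j(\cdot-x_n^j)$ to the remainder $r_n^J$: the remainder is not localized anywhere, so there is no separation of supports to exploit, and the exponential off-diagonal decay of the kernel of $\sqrt{R_0(i\kappa)}$ gives nothing. These terms must instead be killed using the weak convergence $r_n^J(\cdot+x_n^j)\rightharpoonup 0$, which holds by construction. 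This is how the paper proceeds: it proves a term-by-term trace decoupling \eqref{trace decoup} for each power $\ell\geq 2$ at every extraction step, directly from the weak convergence of the new remainder (the $\ell=2$ case from the Hilbert-space structure of $H^{-1}_\kappa$, the $\ell\geq3$ cases by first discarding very high and very low frequencies and then using that the medium-frequency part of $r_n^J(\cdot+x_n^{J+1})$ converges to zero uniformly on compacts). The telescoped decoupling \eqref{alpha decoupling} then comes for free from the induction, and \eqref{decoup par} is verified afterwards by contradiction rather than being an input.

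Second, your inverse inequality rests on a qualitative vanishing lemma whose uniformity you yourself flag as the delicate point. The paper avoids this entirely with a short quantitative argument: equicontinuity kills $P_{\geq N}$ and Bernstein kills $P_{\leq N}$ (since $\tr(T_{P_{\leq N}u_n}^4)\lesssim NA^4$), so a single dyadic block carries a definite amount of $\I_4$-content; the bound $\tr(T_{P_Nu_n}^4)\lesssim A^2\|P_Nu_n\|_{L^\infty}^2$ then produces a concentration point $x_n$ with $|[P_Nu_n](x_n)|\gtrsim c(\eps,A)A^{-2}$, which simultaneously selects the translation parameter and certifies that the weak limit is nonzero. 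You would do well to replace your Lions-type dichotomy with this argument; it also furnishes the explicit lower bound $\tr(T_{\phi}^4)\geq\tilde c(\eps,A)$ needed to make the termination argument for \eqref{r to 0} clean.
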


Equation \eqref{r to 0} shows that the remainder is small in the sense that a certain operator is small in $\I_4$, the trace ideal modeled on $\ell^4$.  In fact, it is negligible in any $\I_p$ with $p>2$.  This follows from \eqref{r to 0} by means of the basic inequality
\begin{align}\label{Ip Holder}
\| A \|_{\I_p} \leq  \| A \|_{\I_{p_1}}^\theta\| A \|_{\I_{p_2}}^{1-\theta}
	\quad\text{when $1\leq p_1<p<p_2\leq\infty$ and } \theta=\tfrac{p_1}{p}\cdot\tfrac{p_2-p}{p_2-p_1}.
\end{align}

Nonetheless, as \eqref{E:alpha r} shows, the remainder term may make a significant contribution to $\alpha$.  We shall ultimately see that optimizing sequences must have negligible remainder term because it contributes \emph{too much} to $\alpha$.

The nucleus of the proof of Proposition~\ref{P:CC} is the inverse inequality Lemma~\ref{L:inverse}.  It shows that non-trivial $\I_4$ norm may be attributed to the existence of a non-trivial profile common to a subsequence of the original sequence $u_n$.  Before stating this lemma, let us quickly discuss our notations for basic Littlewood-Paley theory; these will be needed in the proof.

For $N \in 2^\Z$, we write $P_N$ for the Fourier multiplier operators defined via a partition of unity adapted to the partition $\{ \xi\in\R : \frac12N < |\xi| \leq 2N\}$ of $\R$. We then define projections onto high and low frequencies via
$$
P_{\leq N}  f  = \sum_{2^\Z\ni M\leq N} P_M f \qtq{and} P_{\geq N}  f  = \sum_{2^\Z\ni M\geq N} P_M f.
$$
One of the key estimates we need is the Bernstein inequality,
$$
\| P_{\leq N} f \|_{L^q} \lesssim N^{\frac1p-\frac1q} \| f \|_{L^p} \qtq{whenever} 1\leq p \leq q \leq\infty.
$$

\begin{lemma}[Inverse inequality]\label{L:inverse}
Assume $\{u_n\}_{n \geq 1}$ are equicontinuous in $H^{-1}$ and satisfy
\begin{align*}
\eps< \liminf_{n\to \infty}\tr\bigl\{ \bigl(\sqrt{R_0(i\kappa)}u_n\sqrt{R_0(i\kappa)}\bigr)^4\bigr\}\quad \text{and} \quad \limsup_{n\to \infty}\|u_n\|_{H^{-1}}<A
\end{align*}
for some positive $\eps$, finite $A$, and some $\kappa\geq 1+A^2$.  Then passing to a subsequence there exist a non-zero profile $\phi\in H^{-1}$ and positions $x_n\in \R$ such that
\begin{align}
& \,\, \, u_n(x+x_n)\rightharpoonup \phi(x) \quad\text{weakly in $H^{-1}$},\notag\\
&\lim_{n\to \infty} \Bigl[\alpha(\kappa; u_n) -\alpha(\kappa;u_n(\cdot+x_n)-\phi)\Bigr] = \alpha(\kappa;\phi) .\label{decoup}
\end{align}
\end{lemma}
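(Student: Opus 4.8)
The plan is to run the profile-extraction step of a concentration-compactness scheme in three stages: convert the quartic $\I_4$ lower bound into a lower bound on a translation-detecting quantity, use the latter to locate the profile $\phi$, and finally verify the trace decoupling \eqref{decoup}. Write $T_n:=\sqrt{R_0(i\kappa)}\,u_n\,\sqrt{R_0(i\kappa)}$, which is self-adjoint since the $u_n$ are real. First I would observe that $\tr\{T_n^4\}=\|T_n\|_{\I_4}^4\le\|T_n\|_{\op}^2\,\|T_n\|_{\I_2}^2$, and that \eqref{R I2 kappa} together with $\kappa\ge 1+A^2$ bounds $\|T_n\|_{\I_2}^2\le\kappa^{-1}\|u_n\|_{H^{-1}}^2$ uniformly. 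Hence the hypothesis forces $\liminf_n\|T_n\|_{\op}>0$.

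The decisive localization step is the inequality
\[
\|T_n\|_{\op}\ \lesssim_{\kappa}\ \sup_{y\in\R}\ \bigl\|\chi(\cdot-y)\,u_n\bigr\|_{H^{-1}},
\]
where $\chi$ is a fixed unit bump. To prove it I would use that $T_n$ is self-adjoint, so $\|T_n\|_{\op}=\sup\bigl\{\,\bigl|\int u_n|\psi|^2\bigr|:\|\psi\|_{H^1_\kappa}=1\bigr\}$ (recall $\sqrt{R_0(i\kappa)}$ is an isomorphism of $L^2$ onto $H^1_\kappa$), then insert a unit-scale partition of unity $\{\chi_j\}$ on $\psi$ rather than on $|\psi|^2$, and exploit that $H^1(\R)$ is an algebra while the pieces $\chi_j\psi$ are $\ell^2$-summable in $H^1$ with finite overlap. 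This produces a sequence $x_n$ with $\|\chi(\cdot-x_n)\,u_n\|_{H^{-1}}\gtrsim 1$, equivalently $\|\chi\cdot u_n(\cdot+x_n)\|_{H^{-1}}\gtrsim 1$.

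Passing to a subsequence, boundedness in $H^{-1}$ gives $u_n(\cdot+x_n)\rightharpoonup\phi$ for some $\phi\in H^{-1}$. To see $\phi\neq 0$ I would combine the spatial localization supplied by $\chi$ with the frequency tightness furnished by equicontinuity: together these yield local compactness (the Rellich/Riesz--Kolmogorov principle), so $\chi\,u_n(\cdot+x_n)\to\chi\phi$ strongly in $H^{-1}$ and the lower bound $\gtrsim 1$ survives in the limit. With $\phi\neq 0$ the first assertion of the lemma holds.

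For \eqref{decoup}, set $\tilde u_n=u_n(\cdot+x_n)$ and $w_n=\tilde u_n-\phi\rightharpoonup 0$, and put $P:=\sqrt{R_0(i\kappa)}\,\phi\,\sqrt{R_0(i\kappa)}$ (a fixed Hilbert--Schmidt operator) and $W_n:=\sqrt{R_0(i\kappa)}\,w_n\,\sqrt{R_0(i\kappa)}$ (bounded in $\I_2$ and tending to $0$ in the weak operator topology). Since $\alpha$ is translation invariant, expanding the series \eqref{alpha defn} reduces \eqref{decoup} to showing that, for each $\ell\ge 2$, every mixed term in $\tr\{(P+W_n)^\ell\}-\tr\{W_n^\ell\}-\tr\{P^\ell\}$ tends to $0$; uniform summability of the tails is guaranteed by \eqref{R I2 kappa} because $\|T_n\|_{\I_2}\le A/\sqrt{\kappa}<1$. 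Cyclicity places a factor $P$ immediately before a factor $W_n$ in each mixed term, so the term is controlled by $\|PW_n\|_{\I_2}$ (for $\ell\ge 3$, the remaining factors being bounded in $\I_2$) or handled directly by weak $\I_2$-convergence (for $\ell=2$). The heart of the matter --- and the step I expect to require the most care --- is proving $\|PW_n\|_{\I_2}\to 0$. Approximating $P$ by a finite-rank operator with Schwartz legs reduces this to the \emph{strong} convergence $\|W_n f\|_{L^2}\to 0$ for Schwartz $f$; since $W_n f=\sqrt{R_0(i\kappa)}\,\bigl(w_n\cdot\sqrt{R_0(i\kappa)}f\bigr)$ with $\sqrt{R_0(i\kappa)}f$ spatially localized, this is exactly where weak convergence must be upgraded to strong, and, as elsewhere in this paper, the only available mechanism is the non-quantitative local compactness coming from equicontinuity of $w_n$. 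Making this upgrade precise, and thereby killing all the cross terms, is the crux of the argument.
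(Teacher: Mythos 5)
Your argument is correct and proves the lemma as stated, but both of its key mechanisms differ from the paper's. To locate $x_n$, the paper does not pass through the operator norm and a spatial partition of unity; it pigeonholes in frequency: equicontinuity kills $P_{\geq N}u_n$ for $N$ large (estimate \eqref{hi}), Bernstein kills $P_{\leq N}u_n$ for $N$ small (estimate \eqref{lo}), so a single dyadic block carries a definite amount of $\I_4$ mass, and the bound $\tr\{(\sqrt{R_0}[P_Nu_n]\sqrt{R_0})^4\}\lesssim A^2\|P_Nu_n\|_{L^\infty}^2$ produces a point with $|[P_Nu_n](x_n)|\gtrsim c(\eps,A)$. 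This makes nontriviality of the weak limit immediate (evaluation of $P_N\phi$ at the origin is a pairing against a fixed Schwartz function, so mere weak convergence suffices) and, more importantly for the sequel, it yields the quantitative lower bound \eqref{nontrivial rem} on $\tr\{(\sqrt{R_0(i\kappa)}\,\phi\,\sqrt{R_0(i\kappa)})^4\}$ that drives the termination estimate \eqref{eps rec} in the proof of Proposition~\ref{P:CC}. Your route via $\|T_n\|_{\op}\lesssim\sup_y\|\chi(\cdot-y)u_n\|_{H^{-1}}$ together with Rellich-type compactness does give $\phi\neq0$, but a lower bound on $\|\chi\phi\|_{H^{-1}}$ does not obviously convert into a lower bound on the $\I_4$ trace of the profile, so you would need to supplement your construction before the lemma can be iterated as in Proposition~\ref{P:CC}. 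For the decoupling, your identification of the crux---killing the cross terms by upgrading $w_n\rightharpoonup0$ to strong convergence against localized test objects, with equicontinuity as the only available compactness mechanism---matches the paper exactly; the paper implements it by first discarding extreme frequencies (via \eqref{Ip Holder} with \eqref{hi} and \eqref{lo}), so that $P_{\mathrm{med}}[u_n(\cdot+x_n)-\phi]\to0$ uniformly on compact sets, and then dominating each mixed trace by the explicit kernel quantity $\langle R_0(2i\kappa)(P_{\mathrm{med}}\phi)^2,(P_{\mathrm{med}}[u_n(\cdot+x_n)-\phi])^2\rangle_{L^2}$, whereas you approximate the fixed Hilbert--Schmidt operator $P$ by finite-rank operators with Schwartz legs. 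Both implementations are valid; the paper's buys quantitative control on the extracted profile, while yours isolates more cleanly the single analytic fact ($\|W_nf\|_{L^2}\to0$ for Schwartz $f$) on which the cross-term analysis rests.
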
 

\begin{proof}
Passing to a subsequence, we may assume that for all $n$ we have 
\begin{align*}
\tfrac12\eps<\tr\bigl\{ \bigl(\sqrt{R_0(i\kappa)}u_n\sqrt{R_0(i\kappa)}\bigr)^4\bigr\} \quad \text{and} \quad \|u_n\|_{H^{-1}}^2<2A^2.
\end{align*}

For $N\in 2^\N$, we use \eqref{R I2 kappa} to estimate 
\begin{align}\label{hi}
\tr\bigl\{ \bigl(\sqrt{R_0(i\kappa)}[P_{\geq N}u_n]\sqrt{R_0(i\kappa)}\bigr)^4\bigr\}
&\lesssim  \bigl\| \sqrt{R_0(i\kappa)}[P_{\geq N}u_n]\sqrt{R_0(i\kappa)}\bigr\|_{\I_2}^4\notag\\
&\lesssim \kappa^{-2} \|P_{\geq N}u_n\|_{H^{-1}_\kappa}^4\lesssim \|P_{\geq N}u_n\|_{H^{-1}}^4<\tfrac18\eps,
\end{align}
provided $N$ is sufficiently large depending on $\eps$, in view of the equicontinuity of $u_n$.

On the other hand, for dyadic $N\leq 1$ we may use Bernstein to estimate 
\begin{align}\label{lo}
\tr\bigl\{& \bigl(\sqrt{R_0(i\kappa)}[P_{\leq N}u_n]\sqrt{R_0(i\kappa)}\bigr)^4\bigr\}\notag\\
&\lesssim \bigl\| \sqrt{R_0(i\kappa)}[P_{\leq N}u_n]\sqrt{R_0(i\kappa)}\bigr\|_{\I_2}^2 \bigl\| \sqrt{R_0(i\kappa)}[P_{\leq N}u_n]\sqrt{R_0(i\kappa)} \bigr\|_\op^2\notag\\
&\lesssim \kappa^{-1} \|u_n\|_{H^{-1}_\kappa}^2 \kappa^{-4}\|P_{\leq N} u_n\|_{\infty}^2\lesssim \kappa^{-3} N \|u_n\|_{H^{-1}_\kappa}^4\lesssim N A^4<\tfrac18\eps,
\end{align}
provided $N$ is sufficiently small depending on $\eps$ and $A$.

Therefore, passing to a further subsequence, we deduce that there exists a dyadic $N$ such that
$$
\tr\bigl\{ \bigl(\sqrt{R_0(i\kappa)}[P_Nu_n]\sqrt{R_0(i\kappa)}\bigr)^4\bigr\}\geq c(\eps,A) \qquad \text{for all $n$},
$$
where $c(\eps, A)$ is a positive continuous function on $[0, \infty)\times[0,\infty)$. As
\begin{align*}
\tr\bigl\{ \bigl(\sqrt{R_0(i\kappa)}[P_Nu_n]\sqrt{R_0(i\kappa)}\bigr)^4\bigr\}\lesssim \kappa^{-1} \|u_n\|_{H^{-1}_\kappa}^2 \kappa^{-4} \|P_N u_n\|_{\infty}^2\lesssim A^2 \|P_N u_n\|_{\infty}^2,
\end{align*}
there exists $x_n\in \R$ such that 
\begin{align}\label{nontrivial}
|[P_Nu_n](x_n)|\gtrsim c(\eps,A)A^{-2}.
\end{align}

As the sequence $u_n(x+x_n)$ is bounded in $H^{-1}$, passing to a subsequence we find $\phi\in H^{-1}$ such that
\begin{align}\label{weak convg}
u_n(x+x_n)\rightharpoonup \phi(x) \quad\text{weakly in $H^{-1}$}.
\end{align}
In view of \eqref{nontrivial}, we see that $\phi\neq 0$.  In fact, it is not difficult to verify that
\begin{align}\label{nontrivial rem}
\tr\bigl\{ \bigl(\sqrt{R_0(i\kappa)}\phi\sqrt{R_0(i\kappa)}\bigr)^4\bigr\}\geq \tilde c(\eps,A),
\end{align}
where $\tilde c(\eps, A)$ is a positive continuous function on $[0, \infty)\times[0,\infty)$. Indeed, even the operator norm of $\sqrt{R_0(i\kappa)}\phi\sqrt{R_0(i\kappa)}$ satisfies such a lower bound.

It remains to prove the asymptotic decoupling \eqref{decoup}.  To this end, it suffices to show that for all $\ell\geq 2$ we have 
\begin{align}\label{trace decoup}
\lim_{n\to \infty}\tr\bigl\{ \bigl(\sqrt{R_0(i\kappa)}u_n\sqrt{R_0(i\kappa)}\bigr)^\ell\bigr\} &- \tr\bigl\{ \bigl(\sqrt{R_0(i\kappa)}\bigl[u_n(\cdot+x_n)-\phi\bigr]\sqrt{R_0(i\kappa)}\bigr)^\ell\bigr\}\notag\\
&=\tr\bigl\{ \bigl(\sqrt{R_0(i\kappa)}\phi\sqrt{R_0(i\kappa)}\bigr)^\ell\bigr\}.
\end{align}

The case $\ell=2$ of \eqref{trace decoup} follows easily from the weak convergence \eqref{weak convg} and the fact that $H^{-1}_\kappa$ is a Hilbert space.  Indeed, by \eqref{R I2 kappa},
\begin{align*}
\tr\bigl\{ \bigl(\sqrt{R_0(i\kappa)}&u_n\sqrt{R_0(i\kappa)}\bigr)^2\bigr\} - \tr\bigl\{ \bigl(\sqrt{R_0(i\kappa)}\bigl[u_n(\cdot+x_n)-\phi\bigr]\sqrt{R_0(i\kappa)}\bigr)^2\bigr\}\\
&= \kappa^{-1} \|u_n\|_{H^{-1}_\kappa}^2 - \kappa^{-1} \|u_n(\cdot+x_n)-\phi\|_{H^{-1}_\kappa}^2\\
&= \kappa^{-1}\|\phi\|_{H^{-1}_\kappa}^2 + 8\Re \langle R_0(2i\kappa)\phi, u_n(\cdot+x_n)-\phi\rangle_{L^2}\\
&=\tr\bigl\{ \bigl(\sqrt{R_0(i\kappa)}\phi\sqrt{R_0(i\kappa)}\bigr)^2\bigr\} + o(1) \quad\text{as}\quad n\to \infty.
\end{align*}

We now turn to the case $\ell\geq 3$ in \eqref{trace decoup}.  First, combining \eqref{Ip Holder} with \eqref{hi} and \eqref{lo}, we see that we may discard very high and very low frequencies from further consideration.  
Thus, it suffices to prove \eqref{trace decoup} under the assumption that $u_n$ and $\phi$ are replaced by $P_{\text{med}}u_n$ and $P_{\text{med}}\phi$, respectively.  Passing to a further subsequence, if necessary, in this case we have
\begin{align}\label{to 0}
P_{\text{med}}\bigl[ u_n(\cdot+x_n)-\phi\bigr] \to 0 \quad\text{uniformly on compact sets}.
\end{align}
To continue, we write
\begin{align*}
\tr\bigl\{ \bigl(\sqrt{R_0(i\kappa)}&[P_{\text{med}}u_n]\sqrt{R_0(i\kappa)}\bigr)^\ell\bigr\} - \tr\bigl\{ \bigl(\sqrt{R_0(i\kappa)}[P_{\text{med}}\phi]\sqrt{R_0(i\kappa)}\bigr)^\ell\bigr\}\\
&\qquad \qquad \qquad-\tr\bigl\{ \bigl(\sqrt{R_0(i\kappa)}\bigl[P_{\text{med}}\bigl[u_n(\cdot+x_n)-\phi\bigr]\bigr]\sqrt{R_0(i\kappa)}\bigr)^\ell\bigr\}\\
&=\sum \tr\bigl\{ R_0(i\kappa)F_1R_0(i\kappa)F_2\cdots R_0(i\kappa)F_\ell\bigr\},
\end{align*}
where the sum is over all choices of $F_1, \ldots, F_\ell\in \{P_{\text{med}}\bigl[u_n(\cdot+x_n)-\phi\bigr], P_{\text{med}}\phi\}$ that are not all identical.  We estimate
\begin{align*}
&\bigl|\tr\bigl\{ R_0(i\kappa)F_1R_0(i\kappa)F_2\cdots R_0(i\kappa)F_\ell\bigr\}\bigr|\\
&\lesssim \bigl\|P_{\text{med}}\bigl[u_n(\cdot+x_n)-\phi\bigr]R_0(i\kappa)P_{\text{med}}\phi\bigr\|_{\I_2} \|\sqrt{R_0(i\kappa)}\|_{\op}^2\\
&\quad\times \Bigl[ \bigl\| \sqrt{R_0(i\kappa)}[P_{\text{med}}u_n]\sqrt{R_0(i\kappa)}\bigr\|_{\I_2} + \bigl\| \sqrt{R_0(i\kappa)}[P_{\text{med}}\phi]\sqrt{R_0(i\kappa)}\bigr\|_{\I_2} \Bigr]^{\ell-2}\\
&\lesssim \kappa^{-2-\frac{\ell-2}2}\bigl[ \|u_n\|_{H^{-1}_\kappa} +\|\phi\|_{H^{-1}_\kappa} \bigr]^{\ell-2} \\
&\quad\times\Bigl[\kappa^{-1}\bigl\langle R_0(2i\kappa)(P_{\text{med}}\phi)^2, (P_{\text{med}}[u_n(\cdot+x_n)-\phi])^2\bigr\rangle_{L^2}\Bigr]^{\frac12},
\end{align*}
which converges to zero as $n\to \infty$ in view of \eqref{to 0}.
\end{proof}

We are now ready to complete the

\begin{proof}[Proof of Proposition~\ref{P:CC}]
Fix $\kappa_0= 1+\sup_n\|u_n\|_{H^{-1}}^2$.  We will apply Lemma~\ref{L:inverse} at spectral parameter $\kappa_0$ inductively, extracting one profile at a time.  To start, we set $r_n^0:=u_n$.  Now suppose
we have a decomposition up to level $J\geq 0$ satisfying \eqref{alpha decoupling}. Passing to a subsequence if necessary, we set
\begin{align*}
A_J:=\lim_{n\to\infty} \|r_n^J\|_{\dot H^{-1}} \qtq{and} \eps_J:=\lim_{n\to \infty} \tr\bigl\{ \bigl(\sqrt{R_0(i\kappa_0)}r_n^J\sqrt{R_0(i\kappa_0)}\bigr)^4\bigr\}.
\end{align*}

If $\eps_J=0$, we stop and set $J^*=J$.  If not, we apply Lemma~\ref{L:inverse} at spectral parameter $\kappa_0$ to $r_n^J$.  Passing to a subsequence in $n$, this yields a non-zero profile $\phi^{J+1}\in\dot H^{-1}$ and positions $x_n^{J+1}\in\R$ such that
\begin{align}\label{weak lim}
\phi^{J+1}(x)=\wlim_{n\to\infty}r_n^J\bigl(x+x_n^{J+1}\bigr).
\end{align}

To continue, we define $r_n^{J+1}(x):=r_n^J(x)-\phi^{J+1} \bigl(x-x_n^{J+1}\bigr)$. From Lemma~\ref{L:inverse},
\begin{align*}
\lim_{n\to\infty}\Bigl[\alpha(\kappa_0;r_n^J)-\alpha(\kappa_0;r_n^{J+1}) -\alpha(\kappa_0;\phi^{J+1})\Bigr]=0,
\end{align*}
which combined with the inductive hypothesis gives \eqref{alpha decoupling} at the level $J+1$ and spectral parameter $\kappa_0$.  Moreover, from \eqref{trace decoup} we get
\begin{align*}
\lim_{n\to \infty}\tr\bigl\{ \bigl(\sqrt{R_0(i\kappa_0)}r_n^J\sqrt{R_0(i\kappa_0)}\bigr)^4\bigr\} &- \tr\bigl\{ \bigl(\sqrt{R_0(i\kappa_0)}r_n^{J+1}\sqrt{R_0(i\kappa_0)}\bigr)^4\bigr\}\\
&=\tr\bigl\{ \bigl(\sqrt{R_0(i\kappa_0)}\phi^{J+1}\sqrt{R_0(i\kappa_0)}\bigr)^4\bigr\},
\end{align*}
which combined with \eqref{nontrivial rem} yields
\begin{align}\label{eps rec}
\eps_{J+1}\leq\eps_J- c_{J+1}(\eps_J, A_J)
\end{align}
for some positive function $c_{J+1}$ which is continuous on $[0,\infty)\times[0,\infty)$.
 
If $\eps_{J+1}=0$, we stop and set $J^*=J+1$; in this case, \eqref{r to 0} at spectral parameter $\kappa_0$ is automatic.  If $\eps_{J+1}>0$ we continue the induction.  If the algorithm does not terminate in finitely many steps, we set  $J^*=\infty$; in this case, \eqref{eps rec} guarantees that $\eps_J\to 0$ as $J\to \infty$ and so \eqref{r to 0} at spectral parameter $\kappa_0$ follows.

Next we confirm that \eqref{r to 0} and \eqref{alpha decoupling} hold at \emph{all} spectral parameters $\kappa\geq \kappa_0$.  The asymptotic decoupling \eqref{alpha decoupling} carries over because our argument relies solely on the weak convergence \eqref{weak lim}, as evinced by the proof of \eqref{trace decoup}.  The claim \eqref{r to 0} at spectral parameter $\kappa$ follows from that at spectral parameter $\kappa_0$ since 
\begin{align*}
\bigl\|R_0(i\kappa)^{\frac12}R_0(i\kappa_0)^{-\frac12}\bigr\|_{\op}\leq 1.
\end{align*}

Next we verify the asymptotic orthogonality condition \eqref{decoup par}. We argue by contradiction.  Assume \eqref{decoup par} fails to be true for some pair $(j,\ell)$.  Without loss of generality, we may assume that this is the first pair for which \eqref{decoup par} fails, that is, $j<\ell$ and \eqref{decoup par} holds for all pairs $(j,m)$ with $j<m<\ell$.  Passing to a subsequence, we may assume
\begin{align}\label{cg}
\lim_{n\to \infty}\bigl(x_n^j-x_n^\ell\bigr)= x_0.
\end{align}

From the inductive relation
\begin{align*}
r_n^{\ell-1}=r_n^j-\sum_{m=j+1}^{\ell-1}\phi^m(\cdot-x_n^m),
\end{align*}
we get
\begin{align}\label{tp}
\phi^\ell(x)&=\wlim_{n\to\infty}r_n^{\ell-1}(x+x_n^\ell)\notag\\
&=\wlim_{n\to\infty}r_n^j(x+x_n^\ell)- \sum_{m=j+1}^{\ell-1} \wlim_{n\to \infty}\phi^m(x+x_n^\ell-x_n^m),
\end{align}
where the weak limits are in the $H^{-1}$ topology.  That the first limit on the right-hand side of \eqref{tp} is zero follows from \eqref{cg} and the observation that by construction,
$$
\wlim_{n\to\infty}r_n^j(\cdot+x_n^j)=0.
$$
That the remaining limits are zero follows from our assumption that \eqref{decoup par} holds for all pairs $(j,m)$ with $j<m<\ell$.  Thus \eqref{tp} yields $\phi^\ell=0$, which contradicts the nontriviality of $\phi^\ell$.  This completes the proof of \eqref{decoup par}.

Lastly, we prove \eqref{E:alpha r}:
\begin{align*}
\Bigl| \alpha&(\kappa; r_n^J) - \tfrac1{2\kappa}\int \tfrac{\bigl|\widehat{r_n^J}(\xi)\bigr|^2}{\xi^2+4\kappa^2}\, d\xi\Bigr|\\
&\leq \sum_{\ell\geq 3} \tfrac1\ell \bigl\|\sqrt{R_0(i\kappa)}r_n^J\sqrt{R_0(i\kappa)}\bigr\|_{\I_\ell}^\ell\\
&\leq \bigl\|\sqrt{R_0(i\kappa)}r_n^J\sqrt{R_0(i\kappa)}\bigr\|_{\I_4}^2 \bigl\|\sqrt{R_0(i\kappa)}r_n^J\sqrt{R_0(i\kappa)}\bigr\|_{\I_2} \\
&\qquad\qquad + \sum_{\ell\geq 4} \bigl\|\sqrt{R_0(i\kappa)}r_n^J\sqrt{R_0(i\kappa)}\bigr\|_{\I_4}^4 \bigl\|\sqrt{R_0(i\kappa)}r_n^J\sqrt{R_0(i\kappa)}\bigr\|_{\op}^{\ell-4} \\
&\lesssim \bigl\|\sqrt{R_0(i\kappa)}r_n^J\sqrt{R_0(i\kappa)}\bigr\|_{\I_4}^2 \kappa^{-\frac12}\|r_n^J\|_{H^{-1}_\kappa} \\
&\qquad\qquad +  \bigl\|\sqrt{R_0(i\kappa)}r_n^J\sqrt{R_0(i\kappa)}\bigr\|_{\I_4}^4 \sum_{\ell\geq 4}\kappa^{-\frac{\ell-4}2}\|r_n^J\|_{H^{-1}_\kappa}^{\ell-4}\\
&\lesssim_A \bigl\|\sqrt{R_0(i\kappa)}r_n^J\sqrt{R_0(i\kappa)}\bigr\|_{\I_4}^2 + \bigl\|\sqrt{R_0(i\kappa)}r_n^J\sqrt{R_0(i\kappa)}\bigr\|_{\I_4}^4.
\end{align*}
Thus, using \eqref{r to 0} we deduce \eqref{E:alpha r}
\end{proof}

%%%%%%%%%%%%%%%%%%%%%%%%%%%%%%%%%%%%%%%%%%%%%%%%%%%%%%%%%%%%%%%%%%%%%%%%%%%%%%%%%%%%%%%%%%%%%%%%%%%%%%%%%%%%%%%%%%%%%%%%%%%%%%
\section{Orbital stability}\label{S:OS}
%%%%%%%%%%%%%%%%%%%%%%%%%%%%%%%%%%%%%%%%%%%%%%%%%%%%%%%%%%%%%%%%%%%%%%%%%%%%%%%%%%%%%%%%%%%%%%%%%%%%%%%%%%%%%%%%%%%%%%%%%%%%%%

This section is dedicated to the proof of Theorem~\ref{T:main}.  We argue by contradiction.

Fix $N\geq 1$ and distinct positive parameters $\beta_1, \ldots , \beta_N$.  Assume, towards a contradiction, that there exist $\eps_0>0$, initial data $q_n(0)\in H^{-1}$, and times $t_n\in \R$ such that
\begin{align}\label{data}
\inf_{\vec c\in \R^N} \|q_n(0)- Q_{\vec \beta, \vec c}\|_{H^{-1}}\longrightarrow 0 \quad \text{as}\quad n\to \infty
\end{align}
but 
\begin{align}\label{witness}
\inf_{\vec c\in \R^N} \|q_n(t_n)- Q_{\vec \beta, \vec c}\|_{H^{-1}}\geq \eps_0 \quad \text{for all}\quad n\geq 1.
\end{align}

Recalling that $a_{\ren}$ and $\alpha$ are continuous functions on $H^{-1}$ and conserved by the KdV flow, \eqref{data}, \eqref{a ren Qbc}, and \eqref{alpha Qbc} imply that
\begin{align}
\lim_{n\to \infty}a_{\ren}(k; q_n(t_n))=\lim_{n\to \infty}a_{\ren}(k; q_n(0)) = \prod_{m=1}^N \frac{k-i\beta_m}{k+i\beta_m} e^{\frac{2i\beta_m}{k}}\label{aren convg}
\end{align}
uniformly for $k$ in compact subsets of $\C^+$ and 
\begin{align}
\lim_{n\to \infty}\alpha(\kappa; q_n(t_n))=\lim_{n\to \infty}\alpha(\kappa; q_n(0)) =-\sum_{m=1}^N\ln\bigl(\tfrac{\kappa-\beta_m}{\kappa+\beta_m} \bigr) +\tfrac{2\beta_m}{\kappa}\label{alpha convg}
\end{align}
uniformly for $\kappa\geq 1+ \frac{512}3\sum_m \beta_m^3$.  With a view to future needs, our bound on $\kappa$ combines the restriction needed for \eqref{37deg'} with \eqref{conserv of multi} and the embedding $L^2\hookrightarrow H^{-1}$.  

By Hurwitz's theorem and \eqref{aren convg}, we deduce that for each $1\leq m\leq N$ and $n$ sufficiently large, there exist $\beta_m^{(n)}$ such that
\begin{align}\label{seq of 0}
a_{\ren}\bigl(i\beta_m^{(n)}; q_n(t_n)\bigr)=0 \quad \text{and}\quad \lim_{n\to \infty}\beta_m^{(n)} = \beta_m.
\end{align}

Using \eqref{37deg'}, \eqref{alpha convg}, and the notation from \eqref{G}, we obtain
$$
\|q_n(t_n)\|_{H^{-1}_\kappa}^2\leq 4\kappa \alpha(\kappa; q_n(t_n))\xrightarrow[n\to \infty]{} \sum_{m=1}^N 4\kappa G\bigl( \tfrac{\beta_m}{\kappa}\bigr).
$$
As the right-hand side above converges to zero as $\kappa\to \infty$, we deduce that the sequence $u_n:=q_n(t_n)$ is equicontinuous in $H^{-1}$ and so we may apply Proposition~\ref{P:CC}.  Along a subsequence we may decompose
\begin{align}\label{decomposition}
u_n(x) = \sum_{j=1}^J \phi^j(x-x_n^j) + r_n^J(x)
\end{align}
satisfying the properties \eqref{r to 0} and \eqref{alpha decoupling}.

Our goal is to prove that there are finitely many profiles, each having the shape of a (multi)soliton, and that $r_n^J$ converges to zero in $H^{-1}$.  First, we rule out the possibility of vanishing.  Assume, towards a contradiction, that there are no profiles in \eqref{decomposition} and so $u_n=r_n$. Invoking \eqref{E:alpha r}, we obtain
\begin{align*}
 \tfrac1{2\kappa}\int \tfrac{|\widehat{u_n}(\xi)|^2}{\xi^2+4\kappa^2}\, d\xi \xrightarrow[n\to \infty]{} -\sum_{m=1}^N \ln\Bigl(\tfrac{\kappa-\beta_m}{\kappa+\beta_m}\Bigr) +\tfrac{2\beta_m}{\kappa}=\sum_{m=1}^N G\bigl( \tfrac{\beta_m}{\kappa}\bigr).
\end{align*}
This immediately leads to a contradiction since the functions
$$
\kappa\mapsto \tfrac{\kappa^3}{2\kappa}\int \tfrac{|\widehat{u_n}(\xi)|^2}{\xi^2+4\kappa^2}\, d\xi \quad \text{and}\quad \kappa\mapsto \sum_{m=1}^N \kappa^3 G\bigl( \tfrac{\beta_m}{\kappa}\bigr)
$$
have opposite monotonicity.

Therefore, we may assume that there exists at least one non-trivial profile. From \eqref{HS det2 bnd} and \eqref{R I2}, we have
\begin{align}\label{Barry bdd}
\bigl| 1 - a_{\ren}(k;q)\bigr|\lesssim \exp\bigl\{ C(k) \|q\|_{H^{-1}}^2\bigr\}
\end{align}
with $C(k)$ bounded for $k$ in compact subsets of $\C^+$. Consequently, for $J\geq 1$ fixed, the functions
$$
f_n : k\mapsto a_{\ren}(k;u_n) \exp\Bigl\{ -\tfrac{i}{2k}\int \tfrac{\bigl|\widehat{r_n^J}(\xi)\bigr|^2}{\xi^2-4k^2}\, d\xi \Bigr\}
$$
are holomorphic and locally bounded on $\C^+$.  Invoking Montel's theorem and passing to a subsequence, we find that this sequence converges as $n\to \infty$ to a holomorphic function $f$.  Moreover, by \eqref{seq of 0} we have $f(i\beta_m)=0$ for all $1\leq m\leq N$.

To continue, we combine \eqref{alpha decoupling} with \eqref{E:alpha r} and \eqref{alpha convg} to obtain
\begin{align}\label{upper bdd on alpha phij}
\sum_{j=1}^{J^*} \alpha(\kappa; \phi^j) \leq \sum_{m=1}^NG\bigl( \tfrac{\beta_m}{\kappa}\bigr)
\end{align}
and so by \eqref{alpha to L2},
\begin{align}\label{phi in L2}
\tfrac18 \sum_{j=1}^{J^*}  \|\phi^j\|_2^2=\lim_{\kappa\to \infty}\sum_{j=1}^{J^*} \kappa^3\alpha(\kappa; \phi^j) \leq \sum_{m=1}^N \lim_{\kappa\to \infty} \kappa^3G\bigl( \tfrac{\beta_m}{\kappa}\bigr)<\infty.
\end{align}
Using this and \eqref{Barry bdd}, we see that the function $k\mapsto \prod_{j=1}^{J^*} a_{\ren}(k;\phi^j)$ is well defined and holomorphic on $\C^+$.  Invoking \eqref{alpha decoupling} and \eqref{E:alpha r} one more time, we conclude that 
\begin{align}\label{prod aren phij}
\prod_{j=1}^{J^*} a_{\ren}(k;\phi^j) = f(k) \quad\text{for all} \quad k\in \C^+.
\end{align}

Let $\vec \beta^j$ denote the collection of all zeros of $a_{\ren}(i\kappa;\phi^j)$.  Evidently, $\coprod_j \vec\beta^j$ enumerates the zeros (with multiplicity) of $f(ik)$, which contains each 
$\beta_m$, $1\leq m\leq N$.  Also, by Corollary~\ref{C:min alpha}, 
\begin{align}\label{lower bdd on alpha phij}
\alpha(\kappa, \phi^{j})\geq  \sum_{\beta\in \vec\beta^j} G\bigl( \tfrac{\beta}{\kappa}\bigr).
\end{align}

Contrasting \eqref{upper bdd on alpha phij} and \eqref{lower bdd on alpha phij}, we see that $\coprod_j \vec\beta^j=\vec \beta$ without any repetitions.  Moreover, each $\vec\beta^j$ must be non-empty, for otherwise $\alpha(\kappa, \phi^{j})\equiv 0$ and so $\phi^j\equiv 0$, which is impossible; all profiles are non-zero by construction.

From this we deduce that $J^*$ is finite and, after reviewing \eqref{alpha decoupling} and \eqref{E:alpha r}, that $r_n^{J^*}\to 0$ in $H^{-1}$ sense.  More importantly, the comparison of \eqref{upper bdd on alpha phij} and \eqref{lower bdd on alpha phij} shows that each $\phi^j$ must be an optimizer for the variational problem of Theorem~\ref{T:char} with parameters $\vec \beta^j$.  This theorem then tells us that each $\phi^j$ is indeed a multisoliton.

Putting this all together, we deduce that
\begin{align}\label{1}
u_n(x) = \sum_{j=1}^{J^*}Q_{\vec\beta^j,\vec c^j}(x-x_n^j) +r_n(x) \quad\text{with}\quad \lim_{n\to \infty}\| r_n\|_{H^{-1}}=0.
\end{align}

In view of Proposition~\ref{P:mol}, this contradicts \eqref{witness} and so completes the proof of Theorem~\ref{T:main}.\qed

%%%%%%%%%%%%%%%%%%%%%%%%%%%%%%%%%%%%%%%%%%%%%%%%%%%%%%%%%%%%%%%%%%%%%%%%%%%%%%%%%%%%%%%%%%%%%%%%%%%%%%%%%%%%%%%%%%%%%%%%%%%%%%
\section{Higher regularity}\label{S:Hs}
%%%%%%%%%%%%%%%%%%%%%%%%%%%%%%%%%%%%%%%%%%%%%%%%%%%%%%%%%%%%%%%%%%%%%%%%%%%%%%%%%%%%%%%%%%%%%%%%%%%%%%%%%%%%%%%%%%%%%%%%%%%%%%

The purpose of this section is to demonstrate two methods by which one may deduce orbital stability at higher regularity from Theorem~\ref{T:main}.  The two methods are completely independent and so we divide the proof of Corollary~\ref{C:Hs} into two parts:

\begin{proof}[Proof of Corollary~\ref{C:Hs} when $s\in\{0,1\}$]  We begin with the case $s=0$.  Using the conservation of momentum, we find that for any pair of solutions $q(t)$ and $Q(t)$,
\begin{align*}
\| q(t) - Q(t) \|_{L^2}^2 &=  \| q(t) \|_{L^2}^2 - \| Q(t) \|_{L^2}^2 - 2 \langle q(t) - Q(t),\,Q(t)\rangle \\
&\leq \| q(0) \|_{L^2}^2 - \| Q(0) \|_{L^2}^2 + 2 \|q(t) - Q(t)\|_{H^{-1}} \| Q(t) \|_{H^1}.
\end{align*}
Thus, recalling that the momentum of a multisoliton is determined by $\vec\beta$ alone, we see that
\begin{align*}
\inf_{\vec c} \| q(t) - Q_{\vec \beta,\vec c} \|_{L^2}^2  &\leq \inf_{\vec c}  \| q(0) - Q_{\vec \beta,\vec c}\|_{L^2}\bigl( \|q(0)\|_{L^2}+ \sup_{\vec c}\| Q_{\vec \beta,\vec c} \|_{L^2}\bigr)\\
&\quad + 2 \inf_{\vec c} \|q(t) - Q_{\vec \beta,\vec c}\|_{H^{-1}} \cdot \sup_{\vec c} \| Q_{\vec \beta,\vec c} \|_{H^1} .
\end{align*}
As observed by Lax \cite{MR0369963}, the first two polynomial conservation laws control the $H^1$ norm. Thus, the $s=0$ case of Corollary~\ref{C:Hs} follows from \eqref{conserv of multi} and Theorem~\ref{T:main}.

Turning now to the case of $H^1$ we need one preliminary: by Sobolev embedding,
\begin{align*}
\| f\|_{L^3} \lesssim  \||\nabla|^{\frac16}f\|_{L^2}\lesssim \|f \|_{H^1}^{\frac7{12}} \|f\|_{H^{-1}}^{\frac5{12}}.
\end{align*}
Proceeding as we did in the $L^2$ case, but using conservation of energy,
\begin{align*}
\| q'(t) - Q'(t) \|_{L^2}^2 &=  2 H\bigl(q(t)\bigr) - 2 H\bigl(Q(t)\bigr) - 2 \langle q'(t) - Q'(t),\,Q'(t)\rangle \\
	& \qquad -  \int q(t,x)^3 - Q(t,x)^3\,dx \\
&\lesssim 2 \bigl| H\bigl(q(0)\bigr) - 2 H\bigl(Q(0)\bigr) \bigr| + \| q(t) - Q(t) \|_{H^{-1}} \| Q(t) \|_{H^3} \\
	& \qquad + \bigl( \| q(t) \|_{H^1} + \| Q(t) \|_{H^1} \bigr)^{2+\frac7{12}} \|q(t) - Q(t)\|_{H^{-1}}^{\frac5{12}}.
\end{align*}
Thus the result now follows as before from Theorem~\ref{T:main} and the bounds in \cite{MR0369963}.
\end{proof}

The key observation for our second method is the equicontinuity of orbits under \eqref{KdV}.  The specific formulation we need is as follows.

\begin{lemma}\label{L:equiL} Fix $s\in[-1,1)$ and distinct positive parameters $\beta_1, \ldots, \beta_N$.   For every $\eps>0$, there exist $\delta>0$ and $N\in 2^{\Z}$ so that
\begin{align}\label{E:equiL}
\inf_{\vec c} \| q(0) - Q_{\vec\beta,\vec c} \|_{H^s} < \delta \ \implies\ \sup_{t\in\R}  \| q_{\geq N}(t) \|_{H^s} < \eps.
\end{align}
\end{lemma}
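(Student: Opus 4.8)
The conclusion \eqref{E:equiL} is precisely a statement of uniform equicontinuity: the high-frequency tails of the orbit are to be small in $H^s$, uniformly in time and uniformly over all data in a $\delta$-neighbourhood of the manifold $\{Q_{\vec\beta,\vec c}:\vec c\in\R^N\}$. The plan is therefore to show that $H^s$-equicontinuity of the initial data is propagated by the flow. The first step is to record that the manifold itself is bounded and equicontinuous in $H^s$. Since the polynomial conservation laws of a multisoliton depend only on $\vec\beta$ (of which \eqref{conserv of multi} are the first two instances), the family $\{Q_{\vec\beta,\vec c}\}$ is bounded in every $H^k$ by a constant depending only on $\vec\beta$; hence $\sup_{\vec c}\|(Q_{\vec\beta,\vec c})_{\geq N}\|_{H^s}\to0$ as $N\to\infty$. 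Together with the hypothesis $\inf_{\vec c}\|q(0)-Q_{\vec\beta,\vec c}\|_{H^s}<\delta$, this makes the admissible initial data bounded and equicontinuous in $H^s$, with tails controlled quantitatively by $\delta$ and $N$.

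For the base case $s=-1$ the conserved functional $\alpha$ suffices. Applying \eqref{37deg'} to $q$ gives $\sup_{t}\|q(t)\|_{H^{-1}_\kappa}^2\leq\tfrac{12}{7}\|q(0)\|_{H^{-1}_\kappa}^2$ once $\kappa\geq1+64\|q(0)\|_{H^{-1}_\kappa}^2$. The lower bound in \eqref{37deg'} applied to $Q_{\vec\beta,\vec c}$, together with \eqref{alpha Qbc} and \eqref{G}, gives $\|Q_{\vec\beta,\vec c}\|_{H^{-1}_\kappa}^2\leq3\kappa\sum_m G(\beta_m/\kappa)=O(\kappa^{-2})$ uniformly in $\vec c$, so $\|q(0)\|_{H^{-1}_\kappa}^2=O(\kappa^{-2}+\delta^2)$. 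On $|\xi|\geq N\geq2\kappa$ one has $(1+\xi^2)^{-1}\leq2(\xi^2+4\kappa^2)^{-1}$, whence $\sup_t\|q_{\geq N}(t)\|_{H^{-1}}^2\leq2\sup_t\|q(t)\|_{H^{-1}_\kappa}^2=O(\kappa^{-2}+\delta^2)$. Choosing $\kappa$ large, then $\delta$ small, and finally $N=2\kappa$ establishes \eqref{E:equiL} for $s=-1$.

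For $s\in(-1,1)$ the plan is to propagate equicontinuity by a frequency-localized a priori estimate. I would differentiate a smoothly truncated norm $\|q_{\geq N}(t)\|_{H^s}$ along the flow; the Airy term $-q'''$ acts as a Fourier phase and exactly preserves such norms, so only the nonlinearity $6qq'$ contributes. Writing $\widehat{qq'}(\xi)=\tfrac{i\xi}{2}\int\hat q(\xi-\eta)\hat q(\eta)\,d\eta$, an output frequency $|\xi|\geq N$ forces at least one input of modulus $\geq N/2$; thus the high modes cannot be generated from low modes alone, and are driven only by high--low and high--high interactions. Feeding in the uniform $H^{-1}$ control from \eqref{37deg'} to estimate the low factors, one is led to a Gronwall-type inequality for $\|q_{\geq N}(t)\|_{H^s}$ whose initial datum is the small quantity $\|q_{\geq N}(0)\|_{H^s}$ controlled in the first paragraph.

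The crux is to close this estimate uniformly in time and uniformly over the data family at low regularity, where the derivative in $qq'$ costs exactly one power of frequency that the $H^{-1}$ bound alone cannot absorb. This is where the integrable structure must enter: the lost derivative is recovered through the Kato local-smoothing effect of \eqref{KdV}, controlled uniformly by the conserved $\alpha$ exactly as in the well-posedness theory of \cite{KV}. It is this one-derivative gain that restricts the argument to $s<1$; pushing beyond would require iterating the smoothing or invoking higher conservation laws, which is the precise sense in which the details become more cumbersome. Since all the resulting bounds depend only on $\vec\beta$ and on the $H^{-1}$ size of the data, they are uniform across the $\delta$-neighbourhood and yield the uniform $\delta$ and dyadic $N$ claimed in \eqref{E:equiL}.
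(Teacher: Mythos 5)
Your first paragraph and your treatment of $s=-1$ are sound. The observation that the manifold $\{Q_{\vec\beta,\vec c}\}$ is uniformly bounded in $H^1$ (hence $H^s$-equicontinuous for $s<1$), so that the admissible data form an equicontinuous family, is exactly the paper's starting point. Your $s=-1$ argument via \eqref{37deg'} --- comparing $\|q_{\geq N}(t)\|_{H^{-1}}$ with $\|q(t)\|_{H^{-1}_\kappa}$ for $N\geq 2\kappa$ and then transporting the bound back to time zero using conservation of $\alpha$ --- is a correct, quantitative, self-contained version of what the paper obtains by citing \cite[Prop.~4.4]{KV}; this is a legitimate (arguably more informative) route for that case.

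The gap is in $s\in(-1,1)$. A Gronwall inequality for $\ddt\|q_{\geq N}(t)\|_{H^s}$ cannot deliver the conclusion \eqref{E:equiL}, which demands a bound \emph{uniform over all} $t\in\R$: Gronwall gives bounds that deteriorate (typically exponentially) in $|t|$, and no amount of smallness of $\|q_{\geq N}(0)\|_{H^s}$ repairs this. Your appeal to Kato local smoothing to absorb the derivative in $6qq'$ does not close this circle either: local smoothing is a space-time averaged gain on compact time intervals and does not convert into a time-global pointwise-in-$t$ bound on high-frequency tails. The mechanism that actually works is the one you already used at $s=-1$: a one-parameter family of quantities conserved by the flow whose large-$\kappa$ asymptotics control the high frequencies in $H^s$ uniformly in time. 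This is precisely what the paper invokes --- arguing by contradiction and quoting the equicontinuity-propagation results of \cite[Cor.~5.3]{KV} for $-1<s<0$ and \cite[Prop.~3.6]{KVZ} for $0\leq s<1$, both of which are built from such conserved (or monotone-in-$\kappa$ conserved) densities rather than from any perturbative energy estimate. Without either citing these results or constructing the $H^s$-adapted conservation laws yourself, the case $s\in(-1,1)$ of the lemma is not proved.
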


\begin{proof} If this assertion were to fail, then there would exist a sequence of solutions $q_n$ and a sequence of times $t_n$ so that
$$
\limsup_{n\to\infty} \ \inf_{\vec c} \| q_n(0) - Q_{\vec\beta,\vec c} \|_{H^s} =0,
$$
but $\{q_n(t_n):n\in \mathbb{N}\}$ is not equicontinuous in $H^s$.

As $\vec c$ varies, the multisolitons $Q_{\vec \beta,\vec c}$ remain uniformly bounded in $H^1$.  Thus this family is $H^s$-equicontinuous and then so must be the sequence of initial data $q_n(0)$.

When $s=-1$, this directly contradicts the equicontinuity result \cite[Prop.~4.4]{KV}.  The analogous equicontinuity result for $-1<s<0$ appears in the proof of \cite[Cor.~5.3]{KV}.  Finally, when $0\leq s <1$, we may appeal to \cite[Prop.~3.6]{KVZ}.  While this last-quoted result does not explicitly assert equicontinuity, the simplicity with which it may be derived from what is presented there is illustrated (in the $s=0$ case) in \cite[Prop.~A.3(c)]{KV}.
\end{proof}

It remains to present the

\begin{proof}[Proof of Corollary~\ref{C:Hs} when $s\in(-1,1)$]  For any $N\in 2^\Z$ and any pair of solutions,
\begin{align*}
\| q(t) - Q(t) \|_{H^s}^2 &\lesssim   N^{2+2s} \| q(t) - Q(t) \|_{H^{-1}}^2 + N^{2s-2}\| P_{\geq N} Q(t) \|_{H^1}^2 + \| P_{\geq N} q(t) \|_{H^s}^2.
\end{align*}
The result now follows from Theorem~\ref{T:main} and Lemma~\ref{L:equiL}.
\end{proof}

\end{document}